%
%
%
%
\documentclass{amsart}

\usepackage{graphics}
\usepackage{hyperref}

\newtheorem{theorem}{Theorem}[section]
\newtheorem{lemma}[theorem]{Lemma}

\theoremstyle{definition}
\newtheorem{definition}[theorem]{Definition}

\newtheorem{corollary}[theorem]{Corollary}
\newtheorem{proposition}[theorem] {Proposition}
\theoremstyle{remark}
\newtheorem*{remark}{Remark}

\numberwithin{equation}{section}



\begin{document}

\title[Combinatorial realization of the Thom-Smale complex]{Combinatorial realization of the Thom-Smale complex via discrete Morse theory}

\author{\'Etienne Gallais}
\address{Laboratoire de Math\'ematiques et Applications des Mathématiques (LMAM), Universit\'e de Bretagne Sud, Campus de Tohannic - BP 573, 56017 Vannes,  FRANCE}
\curraddr{Laboratoire de Math\'ematiques Jean Leray (LMJL)
UFR Sciences et Techniques, 2 rue de la Houssini\`eres - BP 92208, 44 322 Nantes Cedex 3, FRANCE}
\email{etienne.gallais@math.univ-nantes.fr}






\begin{abstract}
In the case of smooth manifolds, we use Forman's discrete Morse theory to realize combinatorially any Thom-Smale complex coming from a smooth Morse function by a couple triangulation-discrete Morse function.
As an application, we prove that any Euler structure on a smooth oriented closed 3-manifold has a particular realization by a complete matching on the Hasse diagram of a triangulation of the manifold.
\end{abstract}

\maketitle

\section{Introduction}
R. Forman defines a combinatorial analog of smooth Morse theory in \cite{Forman1}, \cite{Forman2}, \cite{Forman3} for simplicial complexes and more generally for CW-complexes.
Discrete Morse theory has many applications (computer graphics \cite{lewiner-lopes-tavares-tvcg04}, graph theory \cite{farley-2005-5}).
An important problem is the research of optimal discrete Morse functions in the sense that they have the minimal number of critical cells (\cite{joswig-2004}, \cite{lewiner-lopes-tavares-cgta03},\cite{salvetti-2007} for the minimality of hyperplane arrangements).

Thanks to a combinatorial Morse vector field $V$, Forman constructs a combinatorial Thom-Smale complex $(C^V,\partial^V)$ whose homology is the simplicial homology of the simplicial complex.
The differential is defined by counting algebraically $V$-paths between critical cells.
Nevertheless, the proof of $\partial^V \circ \partial^V=0$ is an indirect proof (see \cite{Forman2}).
We give two proofs of $\partial^V \circ \partial^V=0$, one which focusses on the geometry and another one which focusses on the algebraic point of view (compare with \cite{chari} and \cite{skoldberg}).
In fact, the algebraic proof gives also the property that the combinatorial Thom-Smale complex is a chain complex homotopy equivalent to the simplicial chain complex.
After that, we investigate one step forward the relation between the smooth Morse theory and the discrete Morse theory.
We prove that any Thom-Smale complex has a combinatorial realization.
We use this to prove that any {${\rm Spin}\sp c$}-structures on a closed oriented 3-manifold can be realized by a complete matching on a triangulation of this manifold.

This article is organised as follows.
In section \ref{section:discrete-morse-theory}, we recall the discrete Morse theory from the viewpoint of combinatorial Morse vector field.
Section \ref{section:proof of d2 equal 0} is devoted to the proofs of $\partial^V \circ \partial^V=0$ and that the Thom-Smale complex is a chain complex homotopy equivalent to the simplicial chain complex.
In section \ref{section:link-between-discrete-and-smooth}, we prove that any combinatorial Thom-Smale complex is realizable as a combinatorial Thom-Smale complex.
In section \ref{section:complete-matchings-and-euler-structures}, we obtain as a corollary the existence of triangulations with complete matchings on their Hasse diagram and prove that any {${\rm Spin}\sp c$}-structures on a closed oriented 3-manifold can be realized by such complete matchings.

\section{Discrete Morse theory}\label{section:discrete-morse-theory}
\subsection{Combinatorial Morse vector field}
First of all, instead of considering discrete Morse functions on a simplicial complex, we will only consider combinatorial Morse vector fields.
In fact, working with discrete Morse functions or combinatorial vector fields is exactly the same \cite[Theorem 9.3]{Forman2}.

In the following, $X$ is a finite simplicial complex and $K$ is the set of cells of $X$.
A cell $\sigma \in K$ of dimension $k$ is denoted $\sigma^{(k)}$.
Let $<$ be the partial order on $K$ given by $\sigma < \tau$ iff $\sigma \subset \overline{\tau}$.
Given a simplicial complex, one associates its Hasse diagram: the set of vertices is the set of cells $K$, an edge joins two cells $\sigma$ and $\tau$ if $\sigma<\tau$ and $dim(\sigma)+1 =dim(\tau)$.

\begin{definition}
A combinatorial vector field $V$ on $X$ is an oriented matching on the associated Hasse diagram of $X$ that is a set of edges $\mathcal M$ such that
\begin{enumerate}
 \item any two distincts edges of $\mathcal M$ do not share any common vertex,
 \item every edge belonging to $\mathcal M$ is oriented toward the top dimensional cell.
\end{enumerate}
A cell which does not belong to any edge of the matching is said to be critical.
\end{definition}

\begin{remark}
The original definition of a combinatorial vector field is the following one: given a matching on the Hasse diagram define
$$
\begin{tabular}{rcll}
$V:$ & $K$ & $\rightarrow$ & $K\cup\{0\}$ \\
& $\sigma$ & $\mapsto$ & $V(\sigma)=  \begin{cases}
\tau & \text{ iff }(\sigma,\tau)\text{ is an edge of the matching and }\sigma<\tau, \\
0 & \text{ otherwise.}\\
\end{cases}$ \\
\end{tabular}
$$
We will use both these points of view in the following.
\end{remark}

A $V$-path of dimension $k$ is a sequence of cells $\gamma : \sigma_0 ,\sigma _1 , \ldots , \sigma_r$ of dimension $k$ such that
\begin{enumerate}
  \item $\sigma_i \neq \sigma_{i+1}$ for all $i\in \{0,\ldots,r-1\}$,
  \item for every $i\in \{0,\ldots,r-1\}$, $\sigma_{i+1} < V(\sigma_i)$.
\end{enumerate}
A $V$-path $\gamma$ is said to be closed if $\sigma_0 = \sigma_r$, and non-stationary if $r>0$.

\begin{definition}
 A combinatorial vector field $V$ which has no non-stationary closed path is called a combinatorial Morse vector\index{Combinatorial!Morse vector field}.
In this case, the corresponding matching is called a Morse matching.
\end{definition}

The terminology \emph{Morse matching} first appeared in \cite{chari}.

\begin{remark}\label{rem:removing-egde}
 Let $V$ be a combinatorial (resp. combinatorial Morse) vector field.
It we remove an edge from the underlying matching, it remains a combinatorial (resp. combinatorial Morse) vector field (there are two extra critical cells).
\end{remark}

\subsection{The combinatorial Thom-Smale complex}
\subsubsection{Definition of the combinatorial Thom-Smale complex}
The following data are necessary to define the combinatorial Thom-Smale complex (see \cite{Forman2}).
First, let $X$ be a finite simplicial complex, $K$ its set of cells and $V$ a combinatorial Morse vector field.
Suppose that every cell $\sigma\in K$ is oriented.

Let $\gamma : \sigma_0 ,\sigma _1 , \ldots , \sigma_r$ be a $V$-path.
Then the multiplicity of $\gamma$ is given by the formula
$$
m(\gamma)=\prod_{ i=0 }^{r-1} -< \partial V(\sigma_i),\sigma_i > < \partial V(\sigma_i),\sigma_{i+1}> \qquad \in \{\pm1\}
$$
where for every cell $\sigma, \tau$, $<\sigma,\tau>\in \{-1,0,1\}$ is the incidence number between the cells $\sigma$ and $\tau$ (see \cite{LW}) and $\partial$ is the boundary map when we consider $X$ as a CW-complex.
In fact, one can think of the multiplicity as checking if the orientation of the first cell $\sigma_0$ moved along $\gamma$ coincides or not with the orientation of the last cell $\sigma_r$.

Let $\Gamma(\sigma,\sigma')$ be the set of $V$-paths starting at $\sigma$ and ending at $\sigma'$ and $Crit_{k}(V)$ be the set of critical cells of dimension $k$.

\begin{definition}
 The combinatorial Thom-Smale complex\index{Combinatorial!Thom-Smale complex} associated with $(X,V)$ is $(C^{V}_{*},\partial^V )$ where:
\begin{enumerate}
\item $C^{V}_{k}=\bigoplus_{\sigma \in Crit_{k}(V)} \mathbb{Z}. \sigma$,
\item if $\tau \in Crit_{k+1}(V)$ then
$$\partial^V \tau = \displaystyle{\sum_{\sigma \in Crit_{k}(V)} n(\tau,\sigma). \sigma}$$
where
$$
n(\tau,\sigma) = \sum_{\widetilde{\sigma} <\tau}  <\partial \tau, \widetilde{\sigma}> \sum_{\gamma \in \Gamma(\widetilde{\sigma},\sigma)} m(\gamma)
$$
\end{enumerate}
\end{definition}

Thus, this complex is exactly in the same spirit as the Thom-Smale complex for smooth Morse functions (see section \ref{section:link-between-discrete-and-smooth}): it is generated by critical cells and the differential is given by counting algebraically $V$-paths.

\begin{theorem}[Forman \cite{Forman2}]\label{theo:d2=0}
 $\partial^V \circ \partial^V = 0$.
\end{theorem}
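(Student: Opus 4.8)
The plan is to realize $(C^V_*,\partial^V)$ as the stable image of an explicit chain self-map of the simplicial chain complex $(C_*(X;\mathbb{Z}),\partial)$; once this is done, $\partial^V\circ\partial^V=0$ is inherited from $\partial\circ\partial=0$. First I would encode the matching as a degree $+1$ operator $\nu\colon C_k(X)\to C_{k+1}(X)$, setting $\nu(\sigma)=-\langle\partial V(\sigma),\sigma\rangle\,V(\sigma)$ on a cell $\sigma$ with $V(\sigma)\neq 0$ and $\nu(\sigma)=0$ otherwise, extended linearly; the sign is chosen so that $\langle\partial\nu(\sigma),\sigma\rangle=-1$ whenever $\sigma$ is matched, i.e.\ $\nu$ partially splits $\partial$. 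Define the discrete flow
$$\Phi=\mathrm{id}_{C_*(X)}-(\partial\nu+\nu\partial).$$
A one-line computation using $\partial\partial=0$ gives $\Phi\partial=\partial\Phi$, so $\Phi$ is a chain map, chain homotopic to the identity via $-\nu$.

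The heart of the argument is a stabilization statement: because $V$ is a combinatorial \emph{Morse} vector field, there is an integer $N$ with $\Phi^{N}=\Phi^{N+1}=:\Phi^{\infty}$, and then $\Phi^{\infty}$ is an idempotent chain map. This is where the absence of non-stationary closed $V$-paths is essential: choosing an ordering of the cells in which the matched cell along a $V$-path strictly decreases step by step --- possible \emph{precisely} because $V$ is Morse --- one shows that, in a suitable ordered basis, $\Phi$ is unipotent and triangular on the span of the non-critical cells, so that its powers stabilize after finitely many steps. Being an idempotent chain map, $\Phi^{\infty}$ has image $C^{\infty}_{*}:=\Phi^{\infty}(C_*(X))$ a direct-summand subcomplex; the restricted differential $\partial^{\infty}$ then satisfies $\partial^{\infty}\circ\partial^{\infty}=0$ for free, and $\Phi^{\infty}$ together with the inclusion $C^{\infty}_{*}\hookrightarrow C_*(X)$ exhibits $C^{\infty}_*$ as chain homotopy equivalent to $C_*(X)$ (the stronger conclusion promised in the introduction).

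It then remains to identify $(C^{\infty}_*,\partial^{\infty})$ with $(C^{V}_*,\partial^{V})$. One checks that $C^{\infty}_k$ is free with basis $\{\Phi^{\infty}(\sigma):\sigma\in Crit_k(V)\}$, where $\Phi^{\infty}(\sigma)=\sigma+(\text{lower matched cells})$, and that expanding $\langle\partial\,\Phi^{\infty}(\tau),\Phi^{\infty}(\sigma)\rangle$ for $\tau\in Crit_{k+1}(V)$ and $\sigma\in Crit_k(V)$ recovers the defining formula
$$n(\tau,\sigma)=\sum_{\widetilde\sigma<\tau}\langle\partial\tau,\widetilde\sigma\rangle\sum_{\gamma\in\Gamma(\widetilde\sigma,\sigma)}m(\gamma).$$
Indeed, iterating $\nu\partial$ walks along $V$-paths one cell at a time, and the telescoping product of incidence numbers collected along such a walk is exactly the multiplicity $m(\gamma)$. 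Hence $\partial^{V}$ corresponds to $\partial^{\infty}$ under this isomorphism, and $\partial^{V}\circ\partial^{V}=0$.

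The main obstacle is the stabilization step, together with the bookkeeping of signs in the last identification: the finiteness of the iteration of $\Phi$ uses the Morse hypothesis in an unavoidable way, and verifying term by term that the incidence-number product appearing in $\Phi^{\infty}$ equals $m(\gamma)$ is routine but delicate. A second, more geometric proof runs parallel to the smooth case: for $\tau\in Crit_{k+2}(V)$ the coefficient of $\sigma\in Crit_k(V)$ in $\partial^{V}\partial^{V}\tau$ is a signed count of $V$-paths from the boundary of $\tau$ to $\sigma$ passing through the boundary of an intermediate critical $(k+1)$-cell; one builds a fixed-point-free, sign-reversing involution on the appropriate set of such decorated $V$-paths, forcing the total count to vanish --- the discrete analogue of the broken-trajectory cancellation.
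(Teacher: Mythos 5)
Your main argument has a sign error that breaks the key stabilization step. With $\nu(\sigma)=-\langle\partial V(\sigma),\sigma\rangle V(\sigma)$, so that $\langle\partial\nu(\sigma),\sigma\rangle=-1$ for matched $\sigma$, and $\Phi=\mathrm{id}-(\partial\nu+\nu\partial)$, one computes $\langle\Phi(\sigma),\sigma\rangle=1-(-1)-0=2$ for every upward-matched $\sigma$ (and likewise $=2$ for every downward-matched cell). Hence $\Phi^{N}(\sigma)=2^{N}\sigma+\cdots$, and $\Phi^{N}$ never stabilizes; the operator $\Phi^{\infty}$ does not exist. The fix is a one-character change: take $\Phi=\mathrm{id}+(\partial\nu+\nu\partial)$ (equivalently, flip the sign of $\nu$). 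Then $\langle\Phi(\sigma),\sigma\rangle=0$ on every matched cell, $\Phi$ is still a chain map chain-homotopic to the identity (via $+\nu$), and your ordering argument does show $\Phi$ is ``identity plus strictly-decreasing'' on the non-critical span, so $\Phi^{N}$ stabilizes because $V$ is Morse. The remainder of your outline --- idempotence of $\Phi^{\infty}$, identification of $\mathrm{im}\,\Phi^{\infty}$ with $C^{V}_{*}$, and the telescoping identification of incidence-number products with $m(\gamma)$ --- is then sound.

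Once the sign is corrected, be aware that your route is precisely Forman's own proof in \cite{Forman2} (the stabilized discrete gradient flow), which is exactly the argument this paper sets out to \emph{avoid}, calling it ``indirect.'' The paper instead gives two proofs, both by induction on the number of edges in the Morse matching. The first removes one edge at a time and shows directly that path contributions to $\partial^{V}\circ\partial^{V}$ cancel in pairs --- close in spirit to your ``second, more geometric proof'' via a sign-reversing involution, but organized inductively rather than as a single global involution on broken paths. The second performs one Gaussian elimination per matching edge (Lemma \ref{lemma:gaussian-elimination}), which proves $\partial^{V}\circ\partial^{V}=0$ and the homotopy equivalence simultaneously, and in addition yields Corollary \ref{corollary:algebra-discrete-morse-theory} (the order-independence of the eliminations, and a characterization of Morse matchings). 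Your stabilized-flow proof also gives the homotopy equivalence ``for free'' and exhibits $C^{V}_{*}$ as a direct summand of $C_{*}(X)$, which the paper's first proof does not; but it does not by itself give the ``one edge at a time'' flexibility that the paper's Gaussian-elimination proof exploits in its corollary.
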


\begin{theorem}[Forman \cite{Forman2}]\label{theo:hom=hom-simplicial}
$(C^{V}_{*},\partial^V )$ is homotopy equivalent to the simplicial chain complex.
In particular, its homology is equal to the simplicial homology.
\end{theorem}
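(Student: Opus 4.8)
The plan is to prove the theorem algebraically, by iterated Gaussian elimination on the simplicial chain complex $(C_*,\partial)$ guided by the Morse matching — the same mechanism that underlies the algebraic proof of Theorem~\ref{theo:d2=0}. The basic ingredient is a one-pair cancellation lemma: if $(D_*,\partial)$ is a bounded complex of finitely generated free abelian groups with a distinguished basis, and $\tau,\sigma$ are basis elements with $\dim\tau=\dim\sigma+1=k+1$ and incidence number $\langle\partial\tau,\sigma\rangle=\varepsilon\in\{\pm1\}$, then there is a complex $\widetilde D_*$ whose group in degree $k+1$, resp. $k$, is spanned by the basis elements other than $\tau$, resp. $\sigma$, with $\widetilde D_j=D_j$ for $j\neq k,k+1$, and whose differential agrees with $\partial$ except on the component $\widetilde D_{k+1}\to\widetilde D_k$, corrected to
$$\langle\widetilde\partial\tau',\sigma'\rangle=\langle\partial\tau',\sigma'\rangle-\varepsilon^{-1}\,\langle\partial\tau',\sigma\rangle\,\langle\partial\tau,\sigma'\rangle .$$
Moreover $\widetilde D_*$ is chain homotopy equivalent to $D_*$, via chain maps $\iota\colon\widetilde D_*\hookrightarrow D_*$ and $\pi\colon D_*\to\widetilde D_*$ with $\pi\iota=\mathrm{id}$. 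I would either verify this in a few lines — replacing, on $D_{k+1}$, each basis element $a\neq\tau$ by $a'=a-\varepsilon^{-1}\langle\partial a,\sigma\rangle\,\tau$, which kills the $\sigma$-component of $\partial a'$ — or quote it from algebraic discrete Morse theory (compare \cite{chari}, \cite{skoldberg}).

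Next I would cancel the pairs of the Morse matching one at a time, in an arbitrary order $(\sigma_1,\tau_1),\dots,(\sigma_N,\tau_N)$, producing a chain of chain homotopy equivalences $C_*\to C^{(1)}_*\to\cdots\to C^{(N)}_*$. Cancelling a pair modifies only the component of the differential from the dimension of its top cell to the dimension of its bottom cell; hence the coefficient $\langle\partial^{(i-1)}\tau_i,\sigma_i\rangle$ that must be a unit for step $i$ to be legitimate is influenced solely by earlier cancellations of pairs in these same two dimensions. This is where the Morse hypothesis intervenes: unwinding the correction rule, any alteration of that coefficient would arise from a face of $\tau_i=V(\sigma_i)$ followed by a nontrivial $V$-path back to $\sigma_i$ through already-cancelled pairs, hence from a non-stationary closed $V$-path based at $\sigma_i$ — which is forbidden. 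So $\langle\partial^{(i-1)}\tau_i,\sigma_i\rangle$ stays equal to the incidence number $\langle\partial\tau_i,\sigma_i\rangle=\pm1$ at every stage, all $N$ steps are legitimate, and the composite $C_*\to C^{(N)}_*$ is a chain homotopy equivalence onto a complex whose degree-$k$ generators are exactly the critical cells $Crit_k(V)$.

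It remains to identify $(C^{(N)}_*,\partial^{(N)})$ with the combinatorial Thom-Smale complex. By induction on the number of cancelled pairs — using that for a Morse vector field every $V$-path is simple, a repeated cell being forbidden, so that no ``zig-zag'' runs through a cancelled pair more than once — the coefficient of a critical $k$-cell $\sigma$ in $\partial^{(N)}\tau$ (with $\tau$ critical of dimension $k+1$) equals the signed sum, over all pairs $(\widetilde\sigma,\gamma)$ where $\widetilde\sigma<\tau$ is a face of $\tau$ and $\gamma\in\Gamma(\widetilde\sigma,\sigma)$, of $\langle\partial\tau,\widetilde\sigma\rangle\cdot m(\gamma)$; here the local signs accumulated by the iterated corrections are precisely Forman's multiplicities. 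Summing over such pairs gives exactly $n(\tau,\sigma)$, so $\partial^{(N)}=\partial^V$, and therefore $(C^V_*,\partial^V)$ is chain homotopy equivalent to the simplicial chain complex $(C_*,\partial)$; in particular its homology is the simplicial homology, and $\partial^V\circ\partial^V=0$ falls out as a byproduct. I expect the main obstacle to be the bookkeeping in this last step — checking that the iterated corrections telescope into $m(\gamma)$ and that zig-zags are in bijection with faces-plus-$V$-paths — since, once the absence of non-stationary closed paths has been used, the homotopy equivalence itself is formal.
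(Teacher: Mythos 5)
Your proposal is essentially the paper's own argument: both reduce the simplicial chain complex to the Thom--Smale complex by Gaussian elimination (the paper's Lemma~\ref{lemma:gaussian-elimination}) on the matched pairs, with the absence of non-stationary closed $V$-paths guaranteeing that the pivot entry is always $\pm1$ and that the accumulated corrections are counted exactly once by $V$-paths. The paper packages the iteration as an induction on the number of matched edges (removing one pair $(\sigma,\tau)$ to get $\overline V$, then eliminating it from $(C^{\overline V}_*,\partial^{\overline V})$), while you run the eliminations forward from $(C_*,\partial)$; the order-independence and unit-pivot observations you make explicit appear in the paper as Corollary~\ref{corollary:algebra-discrete-morse-theory}.
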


We will give a direct proof of both of these theorems.
The proof of Theorem \ref{theo:d2=0} is done by looking at $V$-paths and understanding their contribution to $\partial^V \circ \partial^V$.
Then, we prove Theorem \ref{theo:hom=hom-simplicial} (which gives another proof of Theorem \ref{theo:d2=0}) using Gaussian elimination (this idea first appears in \cite{chari}, see also \cite{skoldberg}).

\subsubsection{Proof of Theorem \ref{theo:d2=0}}\label{section:proof-of-d2-equal-0}
Let $X$ be a simplicial complex, $K$ be the set of its cells and $K_n$ the set of cells of dimension $n$.
The proof is by induction on the number on edges belonging to the Morse matching.

\paragraph{\textbf{Initialization:} matching with no edge.}
In this case, every cell is critical and the combinatorial Thom-Smale complex coincides with the well-known simplicial chain complex.
Therefore, $\partial^V \circ \partial^V =0$.

\paragraph{\textbf{Heredity:} suppose the property is true for every matching with at most $k$ edges defining a combinatorial Morse vector field.}
Let $V$ be a combinatorial Morse vector field with corresponding matching consisting of $k+1$ edges.
In particular, there is no non-stationary closed $V$-path.
Let $(\sigma,\tau)$ be an edge of this matching with $\sigma<\tau$ and let $\overline V$ be the combinatorial Morse vector field corresponding to the original matching with the edge $ (\sigma,\tau)$ removed.
By induction hypothesis $\partial^{\overline V}\circ  \partial^{\overline V} = 0$.
In particular, for every $n\in \mathbb{N}$, every $\tau \in K_{n+1}$ and every $\nu\in K_{n-1}$ when there is a cell $\sigma_1\in K_n$ such that there is a $\overline{V}$-path from an hyperface of $\tau$ to $\sigma_1$ and another $\overline{V}$-path from an hyperface of $\sigma_1$ to $\nu$ there exists another cell $\sigma_2 \in K_n$ with the same property so that their contribution to $\partial^{\overline V}\circ  \partial^{\overline V}$ are opposite.

First, we will prove that $\partial^V \circ \partial^V= 0$ when the chain complex is with coefficients in $\mathbb{Z}/2\mathbb{Z}$ and after we will take care of signs.

Suppose that the distinguished edge of the matching $(\sigma,\tau)$ is such that $dim(\sigma)+1=dim(\tau)=n+1$.
Therefore, $C^{V}_{i} = C^{\overline{V}}_{i}$ for $i\neq n , n+1$ and $\partial^V_{\vert C^{V}_{i}} = \partial^{\overline{V}}_{\vert  C^{V}_{i}}$ for $i\notin \{n,n+1,n+2\}$.
So we have $\partial^V \circ \partial^V (\mu) = 0$ for all $\mu \in K - (K_{n}\cup K_{n+1}\cup K_{n+2})$.

Remark that it is also true for every $\sigma'\in Crit_{n}(V)$ that $\partial^V \circ \partial^V (\sigma')=0$ (since with respect to $\overline V$ it is true and $\sigma'\neq \sigma$).

There are two cases left.
\begin{itemize}
 \item[\textbf{Case 1.}] Let $\tau'\in Crit_{n+1}(V)$.
To see that $\partial^V\circ \partial^V(\tau')=0$ we must consider two cases.
First case is when the two $\overline{V}$-paths which annihilates don't go through $\sigma$.
Then, nothing is changed and contributions to $\partial^V\circ \partial^V(\tau')$ cancel by pair.
The second case is when at least one the $\overline{V}$-path which cancel by pair for $\partial^{\overline{V}}$ go through $\sigma$.
The $\overline{V}$-paths which go from $\tau'$ to $\nu$ are of two types: those who go via $\sigma$ and the others.
Let $\tau' \rightarrow \sigma_2 \rightarrow \nu$ be a juxtaposition of two $\overline{V}$-paths which cancel with the juxtaposition of $\overline{V}$-path $\tau' \rightarrow \sigma \rightarrow \nu$.
Since $\partial^{\overline V}\circ \partial^{\overline{V}}(\tau)=0$, there must be a critical cell $\sigma_1$ such that the juxtaposition of $\overline{V}$-paths $\tau\rightarrow \sigma \rightarrow \nu$ and $\tau\rightarrow \sigma_1 \rightarrow \nu$ cancels.
Therefore, when considering $\partial^V$, three juxtapositions of $\overline{V}$-paths disappear and one is created: $\tau' \rightarrow (\sigma\rightarrow \tau) \rightarrow \sigma_1 \rightarrow \nu$.
It cancels with $\tau' \rightarrow \sigma_2 \rightarrow \nu$.

It may happens that two juxtapositions of $\overline{V}$-paths go through $\sigma$ but this case works exactly in the same way.

\item[\textbf{Case 2.}] This case is similar to the previous case.
Let $\varsigma$ be a cell in $K_{n+2}$.
There are two cases to see that $\partial^V\circ \partial^V (\varsigma)=0$.
The first case is when the two $\overline{V}$-paths whose contributions are opposite don't go through $\tau$.
Then, nothing is changed and contributions to $\partial^V\circ \partial^V(\tau')$ cancel by pair.
The second case is when the $\overline{V}$-path which disappears is replaced by exactly a new one which goes through the edge $(\sigma,\tau)$.
The result follows similarly.
\end{itemize}

Note that to deal with this two cases we used the fact that there is no non-stationary closed $V$-path (and so $\overline{V}$-path).
Now, let's deal with the signs.
We will only consider the case 1. above, other cases work similarly.
Denote $n(\alpha\rightarrow \beta)$ (resp. $\overline{n}(\alpha\rightarrow \beta)$) the sign of the contribution in the differential $\partial^V$ (resp. $\partial^{\overline{V}}$) of a path going from $\alpha$ to $\beta$ where both cells are critical of consecutive dimension.
While considering $\overline{V}$, we have by induction hypothesis
\begin{equation}\label{eq:diff-1}
 \overline{n}(\tau'\rightarrow \sigma_2).\overline{n}(\sigma_2\rightarrow \nu) = - \overline{n}(\tau'\rightarrow \sigma).\overline{n}(\sigma\rightarrow\nu)
\end{equation}
 and
\begin{equation}\label{eq:diff-2}
 \overline{n}(\tau\rightarrow \sigma_1).\overline{n}(\sigma_1\rightarrow \nu) = - \overline{n}(\tau\rightarrow \sigma).\overline{n}(\sigma\rightarrow\nu)
\end{equation}

Since the juxtaposition of the $\overline{V}$-paths $\tau'\rightarrow \sigma_2 \rightarrow \nu$ don't go through $\sigma$ we have that
\begin{equation}\label{eq:diff-3}
 \overline{n}(\tau'\rightarrow \sigma_2).\overline{n}(\sigma_2\rightarrow \nu) = n(\tau'\rightarrow \sigma_2).n(\sigma_2\rightarrow\nu)
\end{equation}

By definition of the multiplicity of paths we have
\begin{equation}\label{eq:diff-4}
 n(\tau'\rightarrow \sigma_1) = \overline{n}(\tau'\rightarrow \sigma) . (- <\partial \tau ,\sigma> ) . \overline{n}(\tau \rightarrow \sigma_1 )
\end{equation}

Combining equations \eqref{eq:diff-1}-\eqref{eq:diff-4} we obtain the following equalities
$$
\begin{array}{rcl}
n(\tau'\rightarrow \sigma_1).n(\sigma_1\rightarrow \nu) & = & \overline{n}(\tau'\rightarrow \sigma) . (- <\partial \tau ,\sigma> ) . \overline{n}(\tau \rightarrow \sigma_1 ) . n(\sigma_1 \rightarrow \nu)\eqref{eq:diff-4} \\

& = &\overline{n}(\tau'\rightarrow \sigma) . (- <\partial \tau ,\sigma> ) . \overline{n}(\tau \rightarrow \sigma_1 ) . \overline{n}(\sigma_1 \rightarrow \nu) \\

& = & \overline{n}(\tau'\rightarrow \sigma) . <\partial \tau ,\sigma>  . \overline{n}(\tau \rightarrow \sigma ) . \overline{n}(\sigma \rightarrow \nu) \quad \eqref{eq:diff-2}\\

& = & (<\partial \tau ,\sigma>  . \overline{n}(\tau \rightarrow \sigma ) ) .\overline{n}(\tau'\rightarrow \sigma) . \overline{n}(\sigma \rightarrow \nu) \\

& = & \overline{n}(\tau'\rightarrow \sigma) . \overline{n}(\sigma \rightarrow \nu) \quad\text{by definition}\\

& = & -\overline{n}(\tau'\rightarrow \sigma_2) . \overline{n}(\sigma_2 \rightarrow \nu) \quad \eqref{eq:diff-1}\\

& = & -n(\tau'\rightarrow \sigma_2) . n(\sigma_2 \rightarrow \nu) \quad \eqref{eq:diff-3}\\
\end{array}
$$
which concludes the proof of the theorem.

\subsubsection{Proof of Theorem \ref{theo:hom=hom-simplicial}}\label{section:proof of d2 equal 0}
The main ingredient of the proof is thinking about combinatorial Morse vector field as an instruction to remove acyclic complexes from the original simplicial chain complex, as done by Chari \cite[Proposition 3.3]{chari} or Sk\"oldberg \cite{skoldberg}.
Given a matching between two cells $\sigma<\tau$, we would like to remove the following short complex (which is acyclic)
$$
0 \rightarrow \mathbb{Z}.\tau \xrightarrow[]{<\partial \tau,\sigma>} \mathbb{Z}.\sigma \rightarrow 0
$$
where $\partial$ is the boundary operator of the simplicial chain complex.
To do this, we use Gaussian elimination (see e.g. \cite{barnatan-2006}):

\begin{lemma}[Gaussian elimination]\label{lemma:gaussian-elimination}
Let $\mathcal C = (C_* ,\partial)$ be a chain complex over $\mathbb{Z}$ freely generated.
Let $b_1 \in C_{i}$ (resp. $b_2\in C_{i-1}$) be such that $C_i = \mathbb{Z}.b_1 \oplus D$ (resp. $C_{i-1} = \mathbb{Z}.b_2 \oplus E$).
If $\phi:\mathbb{Z}.b_1 \rightarrow \mathbb{Z}.b_2$ is an isomorphism of $\mathbb{Z}$-modules, then the four term complex segment of $\mathcal C$
\begin{equation}\label{eq:complex-segment-1}
 \ldots \rightarrow
\begin{bmatrix} C_{i+1} \end{bmatrix}
\xrightarrow[]{ \begin{pmatrix} \alpha \\ \beta \end{pmatrix} }
{ \begin{bmatrix} b_1 \\ D \end{bmatrix} }
\xrightarrow[]{ \begin{pmatrix} \phi & \delta \\ \gamma & \varepsilon \end{pmatrix}  }
{ \begin{bmatrix} b_2 \\ E \end{bmatrix} }
\xrightarrow[]{ \begin{pmatrix}\mu & \nu \end{pmatrix}  }
{ \begin{bmatrix} C_{i-2} \end{bmatrix} }
\rightarrow \ldots
\end{equation}
is isomorphic to the following chain complex segment
\begin{equation}\label{eq:complex-segment-2}
 \ldots \rightarrow
{ \begin{bmatrix} C_{i+1} \end{bmatrix} }
\xrightarrow[]{ \begin{pmatrix} 0 \\ \beta \end{pmatrix} }
{ \begin{bmatrix} b_1 \\ D \end{bmatrix} }
\xrightarrow[]{ \begin{pmatrix} \phi & 0 \\ 0 & \varepsilon - \gamma\phi^{-1}\delta \end{pmatrix}  }
{ \begin{bmatrix} b_2 \\ E \end{bmatrix} }
\xrightarrow[]{ \begin{pmatrix} 0 & \nu \end{pmatrix}  }
{ \begin{bmatrix} C_{i-2} \end{bmatrix} }
\rightarrow \ldots
\end{equation}
Both these complexes are homotopy equivalent to the complex segment
\begin{equation}\label{eq:complex-segment-3}
 \ldots \rightarrow
{ \begin{bmatrix} C_{i+1} \end{bmatrix} }
\xrightarrow[]{ \begin{pmatrix}  \beta \end{pmatrix} }
{ \begin{bmatrix}  D \end{bmatrix} }
\xrightarrow[]{ \begin{pmatrix}  \varepsilon - \gamma\phi^{-1}\delta \end{pmatrix} }
{ \begin{bmatrix}  E \end{bmatrix} }
\xrightarrow[]{ \begin{pmatrix}  \nu \end{pmatrix}  }
{ \begin{bmatrix} C_{i-2} \end{bmatrix} }
\rightarrow \ldots
\end{equation}
Here we used matrix notation for the differential $\partial$.
\end{lemma}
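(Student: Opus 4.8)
The plan is to prove the two assertions in turn: first that the segments \eqref{eq:complex-segment-1} and \eqref{eq:complex-segment-2} are isomorphic as chain complexes, and then that \eqref{eq:complex-segment-2} (hence also \eqref{eq:complex-segment-1}) is homotopy equivalent to \eqref{eq:complex-segment-3}.

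For the isomorphism I would write down an explicit chain map $f$ that is the identity on $C_j$ for every $j\notin\{i,i-1\}$ and on the two middle terms is given by the unipotent automorphisms
$$
f_i = \begin{pmatrix} 1 & \phi^{-1}\delta \\ 0 & 1 \end{pmatrix}\colon \mathbb{Z}.b_1 \oplus D \longrightarrow \mathbb{Z}.b_1 \oplus D, \qquad
f_{i-1} = \begin{pmatrix} 1 & 0 \\ -\gamma\phi^{-1} & 1 \end{pmatrix}\colon \mathbb{Z}.b_2 \oplus E \longrightarrow \mathbb{Z}.b_2 \oplus E,
$$
which are invertible over $\mathbb{Z}$ because they are triangular with $1$'s on the diagonal (and $\phi^{-1}$ makes sense since an automorphism of a rank-one free $\mathbb{Z}$-module is $\pm 1$). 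A routine block computation gives $f_{i-1}\,\begin{pmatrix} \phi & \delta \\ \gamma & \varepsilon\end{pmatrix}\,f_i^{-1} = \begin{pmatrix} \phi & 0 \\ 0 & \varepsilon-\gamma\phi^{-1}\delta\end{pmatrix}$, which is the middle differential of \eqref{eq:complex-segment-2}. Compatibility of $f$ with the outer differentials $\begin{pmatrix}\alpha\\\beta\end{pmatrix}$ and $\begin{pmatrix}\mu&\nu\end{pmatrix}$ is where the hypothesis that $\mathcal C$ is genuinely a chain complex enters: the identities $\partial\circ\partial=0$ at these two spots read $\phi\alpha+\delta\beta=0$ and $\mu\phi+\nu\gamma=0$, i.e. $\alpha=-\phi^{-1}\delta\beta$ and $\mu=-\nu\gamma\phi^{-1}$, and these are exactly what force $f_i\begin{pmatrix}\alpha\\\beta\end{pmatrix}=\begin{pmatrix}0\\\beta\end{pmatrix}$ and $\begin{pmatrix}\mu&\nu\end{pmatrix}f_{i-1}^{-1}=\begin{pmatrix}0&\nu\end{pmatrix}$. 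Hence $f$ is an isomorphism between the two segments.

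For the homotopy equivalence I would observe that, in \eqref{eq:complex-segment-2}, the line $\mathbb{Z}.b_1$ in degree $i$ receives $0$ from $C_{i+1}$ and is carried isomorphically by $\phi$ onto the line $\mathbb{Z}.b_2$ in degree $i-1$, which in turn maps by $0$ into $C_{i-2}$. Consequently \eqref{eq:complex-segment-2} splits as an internal direct sum of chain complexes $A\oplus B$, where $A$ is the two-term complex $\mathbb{Z}.b_1\xrightarrow{\ \phi\ }\mathbb{Z}.b_2$ sitting in degrees $i,i-1$ and $B$ is precisely the segment \eqref{eq:complex-segment-3}. Since $\phi$ is an isomorphism, $A$ is contractible (take $\phi^{-1}$ as contracting homotopy), so the projection $A\oplus B\to B$ and the inclusion $B\hookrightarrow A\oplus B$ are mutually inverse up to homotopy; composing with the isomorphism of the previous paragraph yields the asserted homotopy equivalences with \eqref{eq:complex-segment-3}.

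The mathematics here is entirely elementary, so the step I would be most careful about is purely bookkeeping: fixing once and for all whether differentials act on column vectors and on which side the change-of-basis matrices multiply, since a transpose or sign slip there propagates through every identity above. The freeness hypothesis on $\mathcal C$ is used only to make the block decompositions meaningful and to ensure that $\phi^{-1}$ is defined integrally.
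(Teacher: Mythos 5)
Your proof is correct and takes essentially the same route as the paper: the same unipotent change-of-basis matrices on the two middle terms, the same use of $\partial^2=0$ to obtain $\phi\alpha+\delta\beta=0$ and $\mu\phi+\nu\gamma=0$, and the same conclusion by splitting off and discarding the contractible two-term complex $\mathbb{Z}.b_1\xrightarrow{\phi}\mathbb{Z}.b_2$. You spell out the compatibility with the outer differentials more explicitly than the paper does, but the argument is the same.
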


\begin{proof}
Since $\partial^2 = 0$ in $\mathcal C$, we obtain $\phi\alpha + \delta\beta=0$ and $\mu \phi +\nu\gamma=0$.
By doing the following change of basis $A = \begin{pmatrix} 1 & \phi^{-1}\delta \\ 0 & 1 \end{pmatrix}$ on $\begin{bmatrix} b_1 \\ D \end{bmatrix}$ and $B = \begin{pmatrix} 1 & 0 \\ -\gamma\phi^{-1} & 1 \end{pmatrix}$ on $\begin{bmatrix} b_2 \\ E \end{bmatrix}$ we see that the complex segments \ref{eq:complex-segment-1} and \ref{eq:complex-segment-2} are isomorphic.
Then, we remove the short complex $ 0 \rightarrow \begin{bmatrix} b_1 \end{bmatrix} \stackrel{\phi}{\rightarrow}\begin{bmatrix} b_2 \end{bmatrix} \rightarrow 0$ which is acyclic.
\end{proof}

Now, we are ready to prove Theorem \ref{theo:hom=hom-simplicial}.

\begin{proof}[Proof of Theorem \ref{theo:hom=hom-simplicial}]
Like for the proof of Theorem \ref{theo:d2=0}, we make an induction on the number of edges belonging to the matching defining the combinatorial Morse vector field.
Let $X$ be a simplicial complex, $K$ be the set of its cells.
\paragraph{\textbf{Initialization:} matching with no edge.}
In this case, there is nothing to prove since the combinatorial Thom-Smale complex is exactly the simplicial chain complex.

\paragraph{\textbf{Heredity:} suppose the property is true for every matching with at most $k$ edges defining a combinatorial Morse vector field.}
Let $V$ be a combinatorial Morse vector field whose underlying matching consists of $k+1$ edges.
Let $\sigma^{(n)}<\tau^{(n+1)}$ be an element of this matching and $\overline{V}$ be the combinatorial Morse vector field equal to $V$ with the matching $\sigma<\tau$ removed (it is actually a combinatorial Morse vector field).
So, $C_i^V = C_i^{\overline{V}}$ for all $i\neq n,n+1$ and $\partial^{V} = \partial^{\overline{V}}$ when restricted to $C_i^V$ for all $i\notin \{n,n+1,n+2\}$.
Moreover, we have the following equalities: $(\partial^{\overline{V}})^{\vert C_{n+1}^V} = (\partial^{V})^{\vert C_{n+1}^V}$ and $\partial^{\overline{V}}_{\vert C_{n}^V} =  \partial^V_{\vert C_{n}^V}$.
By induction hypothesis, the combinatorial Thom-Smale complex $(C^{\overline{V}}_*,\partial^{\overline{V}})$ is a chain complex homotopy equivalent to the simplicial chain complex of $X$.
Thus, the combinatorial Thom-Smale complex associated with $\overline{V}$ is equal to the one of $V$ except on the following chain segment (where $\varepsilon = (\partial^{\overline{V}})_{\vert C^V_{n+1}}^{\vert C_n^V}$):
\begin{equation}\label{eq:complex-segment-Vbar}
\ldots \rightarrow
{ \begin{bmatrix} C^{V}_{n+2} \end{bmatrix} }
\xrightarrow[]{\begin{pmatrix} \alpha \\ \partial^V \end{pmatrix}}
{ \begin{bmatrix} \tau \\ C^V_{n+1} \end{bmatrix} }
\xrightarrow[]{\begin{pmatrix} <\partial \tau,\sigma> & \delta \\ \gamma & \varepsilon \end{pmatrix}}
{ \begin{bmatrix} \sigma \\ C^V_n \end{bmatrix} }
\xrightarrow[]{\begin{pmatrix}\mu & \partial^V \end{pmatrix}}
{ \begin{bmatrix} C^V_{n-1} \end{bmatrix} }
\rightarrow \ldots
\end{equation}

Since $X$ is a simplicial complex we have $<\partial \tau,\sigma>\in \{\pm 1\}$.
Applying lemma \ref{lemma:gaussian-elimination}, we obtain the following new combinatorial chain complex which is homotopic to the combinatorial Thom-Smale complex of $\overline{V}$

\begin{equation}\label{eq:complex-segment-V}
 \ldots \rightarrow
 \begin{bmatrix} C^{V}_{n+2} \end{bmatrix}
 \xrightarrow[]{ \begin{pmatrix}  \partial^V \end{pmatrix} }
 \begin{bmatrix} C^V_{n+1} \end{bmatrix}
 \xrightarrow[]{ \begin{pmatrix} \alpha \end{pmatrix}  }
\begin{bmatrix} C^V_{n} \end{bmatrix}
\xrightarrow[]{ \begin{pmatrix} \partial^V \end{pmatrix} }
\begin{bmatrix} C^V_{n-1} \end{bmatrix}
\rightarrow \ldots
\end{equation}
where $\alpha = \varepsilon - \gamma <\partial \tau,\sigma> \delta = \partial^V$.
Thus, the only thing to prove is that $\alpha = \partial^V$ over $C^V_{n+1}$.
To do this, we investigate $\partial^V$.
There are two types of contributions.
First type correspond to $V$-paths which do not go through $\sigma$, and they are counted in $\varepsilon$.
Second type are $V$-paths which go through $\sigma$.
They begin at an hyperface of a critical cell $\tau'$ and go through $\sigma$: this is the contribution of $\delta$.
Then, they jump to $\tau$: this is the contribution of $<\partial \tau,\sigma>$.
Finally they begin at an hyperface of $\tau$ and go to a critical cell in $ C^V_{n}$: this is the contribution of $\gamma$.
It remains to check that the sign is correct, but this is exactly the same as in the first proof of Theorem \ref{theo:d2=0}.
\end{proof}

\begin{corollary}\label{corollary:algebra-discrete-morse-theory}
 Let $X$ be a finite simplicial complex, $\mathcal C  = (C_* ,\partial)$ be the corresponding simplicial chain complex and $\mathcal M$ be a matching $(\sigma_i < \tau_i)_{i\in I}$ on its Hasse diagram defining a combinatorial vector field $V$. Then the following properties are equivalent:
\begin{enumerate}
 \item $\mathcal M$ is a Morse matching,
 \item for any sequence $(\sigma_{i_1} < \tau_{i_1}),(\sigma_{i_2} < \tau_{i_2}),\ldots, (\sigma_{ i_{\vert I\vert} } < \tau_{ i_{\vert I \vert} })$ such that $i_j\neq i_k$ if $j\neq k$, Gaussian eliminations can be performed in this order.
\end{enumerate}
In particular, any sequence of Gaussian eliminations corresponding to $\mathcal M$ lead to the same chain complex which is the combinatorial Thom-Smale complex of $V$.
\end{corollary}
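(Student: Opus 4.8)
For $(1)\Rightarrow(2)$ together with the ``in particular'' clause, and $(2)\Rightarrow(1)$ separately; throughout, ``the Gaussian elimination associated with $(\sigma_{i_m}<\tau_{i_m})$ can be performed'' will mean that, in the chain complex produced by the first $m-1$ eliminations, $\sigma_{i_m}$ and $\tau_{i_m}$ are still among the generators and the coefficient of $\sigma_{i_m}$ in the image of $\tau_{i_m}$ is $\pm1$, so that Lemma~\ref{lemma:gaussian-elimination} applies with this coefficient as $\phi$. For $J\subseteq I$ write $V_J$ for the combinatorial vector field carried by the sub-matching $\{(\sigma_j<\tau_j)\}_{j\in J}$; by Remark~\ref{rem:removing-egde}, if $\mathcal M$ is a Morse matching then every $V_J$ is a combinatorial Morse vector field, and a closed $V_J$-path is a closed $V$-path.

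For $(1)\Rightarrow(2)$ and the final assertion, I would assume $\mathcal M$ Morse, fix an order $i_1,\dots,i_{|I|}$, and prove by induction on $m$ that after the first $m$ eliminations the complex is exactly the combinatorial Thom-Smale complex $(C^{V_{J_m}}_*,\partial^{V_{J_m}})$ with $J_m=\{i_1,\dots,i_m\}$, and that the $(m+1)$-st elimination can be performed. The case $m=0$ is the simplicial chain complex. For the step, $\sigma_{i_{m+1}}$ and $\tau_{i_{m+1}}$ are critical for $V_{J_m}$ (being a matching keeps them out of the first $m$ pairs), so they are generators, and the coefficient of $\sigma_{i_{m+1}}$ in $\partial^{V_{J_m}}\tau_{i_{m+1}}$ equals the incidence number $<\partial\tau_{i_{m+1}},\sigma_{i_{m+1}}>\in\{\pm1\}$: any non-stationary $V_{J_m}$-path from a face $\widetilde\sigma\neq\sigma_{i_{m+1}}$ of $\tau_{i_{m+1}}$ to $\sigma_{i_{m+1}}$, followed by the step $\sigma_{i_{m+1}}\to\widetilde\sigma$ (legitimate since $\sigma_{i_{m+1}}$ is matched with $\tau_{i_{m+1}}$ in $V_{J_{m+1}}$), would be a non-stationary closed $V_{J_{m+1}}$-path, contradicting that $V_{J_{m+1}}$ is Morse. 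Lemma~\ref{lemma:gaussian-elimination} then applies, and identifying the two kinds of contributions to the new differential — precisely the bookkeeping of the heredity step in the proof of Theorem~\ref{theo:hom=hom-simplicial} — shows the resulting complex is $(C^{V_{J_{m+1}}}_*,\partial^{V_{J_{m+1}}})$. Taking $m=|I|$ gives $(C^V_*,\partial^V)$ independently of the order, which is both $(2)$ and the ``in particular'' (and re-proves Theorems~\ref{theo:d2=0} and~\ref{theo:hom=hom-simplicial}).

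For $(2)\Rightarrow(1)$ I would argue the contrapositive. If $\mathcal M$ is not Morse, pick among its non-Morse sub-matchings one that is minimal for inclusion; after relabelling let its index set be $K=\{1,\dots,N\}$. Deleting any pair from it yields a Morse matching, so every non-stationary closed $V_K$-path uses all $N$ pairs; hence one of minimal length visits pairwise distinct cells and, after a cyclic relabelling, reads $\sigma_1\to\sigma_2\to\cdots\to\sigma_N\to\sigma_1$ with $V_K(\sigma_a)=\tau_a$ and $\sigma_{a+1}<\tau_a$ cyclically, so $\sigma_1<\tau_N$ and $\sigma_1\neq\sigma_N$. Now process $K$ in the order $1,2,\dots,N$ (appending the remaining pairs of $I$ afterwards; the first $N$ steps take place in the simplicial complex of $X$ and are insensitive to the pending pairs). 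The sub-matching $\{1,\dots,N-1\}$ is Morse, so by $(1)\Rightarrow(2)$ its $N-1$ eliminations go through and leave $(C^{V'}_*,\partial^{V'})$ with $V'=V_{\{1,\dots,N-1\}}$. A counting argument (using that $V'$ is Morse and that every closed $V_K$-path uses all $N$ pairs) shows that a $V'$-path from a face $\widetilde\sigma\neq\sigma_N$ of $\tau_N$ to $\sigma_N$ has $N-1$ steps and uses each of $1,\dots,N-1$ once; minimality of the cycle then forces, successor by successor, $\widetilde\sigma=\sigma_1$ and the path to be $\sigma_1\to\sigma_2\to\cdots\to\sigma_N$. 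Hence the coefficient of $\sigma_N$ in $\partial^{V'}\tau_N$ equals $<\partial\tau_N,\sigma_N>\,+\,<\partial\tau_N,\sigma_1>\,m(\sigma_1,\dots,\sigma_N)\in\{-2,0,2\}$, which is not invertible over $\mathbb Z$; so the $N$-th Gaussian elimination cannot be performed, and $(2)$ fails.

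The main obstacle is the last paragraph. One must (a) pass to an inclusion-minimal non-Morse sub-matching so that \emph{every} closed $V_K$-path sees all pairs — this is what rigidifies the $V'$-paths into $\sigma_N$; (b) check that $V'$, the cycle broken at one pair, is genuinely Morse, so that the preparatory $N-1$ eliminations are even available; and (c) run the successor-by-successor argument, each deviation from the cycle producing a strictly shorter closed $V_K$-path and hence dropping a pair, against minimality. Once this is in place the arithmetic $(\pm1)+(\pm1)\notin\{\pm1\}$ closes the argument; everything else is the routine path bookkeeping already used in the direct proofs of Theorems~\ref{theo:d2=0} and~\ref{theo:hom=hom-simplicial}.
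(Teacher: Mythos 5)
Your argument follows the same two-pronged strategy as the paper: $(1)\Rightarrow(2)$ plus the final assertion is extracted from the proof of Theorem~\ref{theo:hom=hom-simplicial} (induction on the number of eliminations, showing the intermediate complex is the Thom--Smale complex of the partial matching), and $(2)\Rightarrow(1)$ by contrapositive, producing from a closed path a step at which the pivot is a sum of two (or more) units and therefore not a unit of $\mathbb Z$. Where you differ from the paper is in the amount of care taken in $(2)\Rightarrow(1)$. The paper simply writes down the closed path $\sigma_1,\dots,\sigma_r,\sigma_1$, asserts that after $r-1$ eliminations the relevant pivot equals $\langle\partial V(\sigma_r),\sigma_r\rangle+m(\gamma')$, and concludes; it does not address whether the first $r-1$ eliminations can even be performed (equivalently, whether the partial sub-matching is Morse), nor whether other non-stationary paths contribute further $\pm1$ terms that could restore invertibility. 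You supply exactly the missing scaffolding: passing to an inclusion-minimal non-Morse sub-matching $K$, so that removing any one pair yields a Morse sub-matching (which both guarantees the preparatory eliminations go through and forces every closed $V_K$-path to use each of the $N$ pairs exactly once). This is a genuine improvement over the paper's terse treatment and the right precondition for the argument to work.

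That said, the step you yourself flag as ``the main obstacle'' is indeed where your proof (and, implicitly, the paper's) is not airtight. After the $N-1$ eliminations you need the \emph{unique} non-stationary $V'$-path from a hyperface of $\tau_N$ to $\sigma_N$ to be $\gamma'=\sigma_1\to\cdots\to\sigma_N$, so that the pivot is exactly $\langle\partial\tau_N,\sigma_N\rangle+\langle\partial\tau_N,\sigma_1\rangle\,m(\gamma')\in\{0,\pm2\}$. Your counting argument correctly shows any such $V'$-path uses each of the $N-1$ pairs exactly once and visits all of $\sigma_1,\dots,\sigma_N$, but ``minimality of the cycle then forces, successor by successor, the path to be $\sigma_1\to\cdots\to\sigma_N$'' does not follow from minimality of the length alone: two distinct cyclic orderings of the same $N$ cells could a priori both be valid closed $V_K$-paths of length $N$, giving three or more $\pm1$ contributions. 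The rigidity you need is simplicial, not merely combinatorial: in a simplicial complex two distinct $(k-1)$-faces determine at most one common $k$-simplex, so no unordered pair $\{\sigma_a,\sigma_b\}$ can lie in two different $\tau_\ell$'s; one should use this to show that the two candidate cyclic orderings would force some $\tau_i=\tau_j$, contradicting the matching. Without invoking this simplicial fact, the uniqueness of the $V'$-path — and hence the non-invertibility of the pivot — is not established. The paper is guilty of the same omission, so you have not misread the intended strategy; you have just correctly located the point where it needs a real argument and not yet supplied it.
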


\begin{proof}
\begin{itemize}
 \item[$1 \Rightarrow 2$] This is an immediate consequence of the proof of Theorem \ref{theo:hom=hom-simplicial} and the fact that it leads to the Thom-Smale complex associated to $(X,V)$.
 \item[$2 \Rightarrow 1$] It is enough to show that there is no non-stationary closed path under the hypothesis.
Suppose there is a closed $V$-path $\gamma:\sigma_{1},\ldots, \sigma_{r} ,\sigma_{1}$ and consider any sequence of Gaussian elimination which coincides with $(\sigma_j < V(\sigma_j))$ until step $r$.
In particular, $r\geq 3$ since $X$ is a simplicial complex.
Let $V$ be the corresponding combinatorial vector field.
Let $\gamma':\sigma_{1},\ldots, \sigma_{r}$ be the $V$-path with length decrease by one.
Then
$$<\partial^{r-1} V(\sigma_{r}),\sigma_r> = <\partial V(\sigma_{r}),\sigma_r> + m(\gamma')$$
Since $m(\gamma')=\pm1$, $<\partial^{r-1} V(\sigma_{r}),\sigma_r>$ is not invertible over $\mathbb{Z}$ and the Gaussian elimination cannot be performed (see lemma \ref{lemma:gaussian-elimination}).
This is a contradiction.
\end{itemize}
\end{proof}

\section[Smooth and discrete Morse theories]{Relation between smooth and discrete Morse theories}\label{section:link-between-discrete-and-smooth}

In this section, we investigate the link between smooth and discrete Morse theories.
We first recall briefly the main ingredients of smooth Morse theory.
In particular, we describe the Thom-Smale complex and prove the following:

\begin{theorem}[Combinatorial realization]\label{theo:combinatorial-realization}
 Let $M$ be a smooth closed oriented Riemannian manifold and $f:M\rightarrow \mathbb{R}$ be a generic Morse function.
Suppose that every stable manifold has been given an orientation so that the smooth Thom-Smale complex is defined.
Then, there exists a $C^1$-triangulation $T$ of $M$ and a combinatorial Morse vector field $V$ on it which realize the smooth Thom-Smale complex (after a choice of orientation of each cells of $T$) in the following sense:
\begin{enumerate}
 \item there is a bijection between the set of critical cells and the set of critical points,
 \item for each pair of critical cells $\sigma_p$ and $\sigma_q$ such that $dim(\sigma_p)=dim(\sigma_q)+1$, $V$-paths from hyperfaces of $\sigma_p$ to $\sigma_q$ are in bijection with integral curves of $v$ up to renormalization connecting $q$ to $p$,
 \item this bijection induce an isomorphism between the smooth and the combinatorial Thom-Smale complexes.
\end{enumerate}
\end{theorem}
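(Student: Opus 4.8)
The plan is to build the triangulation $T$ so that it is subordinate to the stable manifold decomposition of $(M,f,v)$, where $v$ is (minus) the gradient of $f$ for the given metric. First I would recall the classical fact (going back to Thom and Laudenbach, under the Morse--Smale genericity hypothesis) that the stable manifolds $W^s(p)$ form a CW-decomposition of $M$ with one cell of dimension $\mathrm{ind}(p)$ for each critical point $p$, and that this decomposition can be refined/triangulated by a $C^1$-triangulation $T$ in which each open stable manifold is a union of open simplices. The key geometric input I would extract is a \emph{collapsing structure}: pushing along the flow of $v$, each simplex of $T$ lying in $W^s(p)$ but not equal to the distinguished cell $\sigma_p$ carrying the critical point can be matched with a neighbouring simplex, exactly as in the standard proof that a regular CW-complex with a "nice" stratification admits a discrete Morse function with critical cells indexed by the strata. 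Concretely, on each stable manifold $W^s(p)$ one has a deformation retraction onto $\{p\}$ given by the flow, and subdividing finely enough one produces, on the simplices of $T$ inside $\overline{W^s(p)}\cap W^s(p)$, a Morse matching whose only critical cell is $\sigma_p$; taking the union over all $p$ of these matchings yields a combinatorial vector field $V$ on $T$ whose critical cells are exactly the $\sigma_p$, establishing (1).

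Next I would verify that $V$ is a \emph{combinatorial Morse} vector field, i.e. has no non-stationary closed $V$-path. This follows because the matching was built compatibly with the flow: along any $V$-path the value of $f$ (evaluated, say, at a chosen interior point, or more robustly a lexicographic "height + position along the flow" function) strictly decreases, so a closed path is impossible. This is the step where care is needed, since $f$ need not be strictly monotone on the combinatorial arrows unless the subdivision is adapted; I would handle it by choosing the triangulation and the matching so that each arrow $\widetilde\sigma < V(\widetilde\sigma)$ corresponds to a genuine flow-collapse and hence strictly decreases an auxiliary discrete Morse function, then invoke the equivalence of the two formulations (Forman, \cite[Theorem 9.3]{Forman2}).

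For (2) I would set up the bijection between $V$-paths and broken-or-unbroken flow lines. A $V$-path from a hyperface of $\sigma_p$ to $\sigma_q$ threads, arrow by arrow, through the simplices of $\overline{W^s(q)}$ and must pass through the unique cell $W^s(q)$ intersects transversally near $q$; unwinding the flow-collapse that defined each arrow, such a path traces out (a neighbourhood of) a unique unparametrised integral curve of $v$ running from $q$ up to $p$, and conversely each such integral curve, being contained in $W^u(q)\cap W^s(p)$ which is $1$-dimensional by Morse--Smale transversality, meets the simplices of $T$ in a sequence that is precisely a $V$-path. Finally, for (3), I would compare the two differentials term by term: the smooth Thom-Smale coefficient $\langle \partial^{\mathrm{sm}}\sigma_p,\sigma_q\rangle$ is the signed count of flow lines from $q$ to $p$, the combinatorial coefficient $n(\sigma_p,\sigma_q)=\sum_{\widetilde\sigma<\sigma_p}\langle\partial\sigma_p,\widetilde\sigma\rangle\sum_{\gamma\in\Gamma(\widetilde\sigma,\sigma_q)} m(\gamma)$ is the signed count of $V$-paths, and the bijection of (2) matches them up; the signs agree provided the orientation of each cell $\sigma_p$ of $T$ is chosen to be induced from the chosen orientation of $W^s(p)$, since then the incidence numbers and multiplicities $m(\gamma)$ compute exactly the intersection signs that appear in the smooth boundary map.

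The main obstacle I expect is the control over the triangulation in the second paragraph: one must produce a $C^1$-triangulation $T$ that is simultaneously subordinate to the (possibly wildly interlocking) closures $\overline{W^s(p)}$ \emph{and} fine enough that the flow-collapse on each stratum descends to an honest simplexwise matching with a single critical cell. I would address this by an inductive construction over the skeleta of the stable decomposition (adding critical points in order of increasing index), using at each stage a regular-neighbourhood/mapping-cylinder argument to attach and subdivide, and citing the standard existence of $C^1$-triangulations of smooth manifolds compatible with a given stratification; the monotonicity of $f$ along arrows and the genericity of $f$ then make the rest of the argument essentially bookkeeping, with the sign verification in (3) the only other point requiring genuine attention.
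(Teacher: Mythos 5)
Your plan captures the right geometric intuition (flow collapse onto the stable manifolds, a single critical cell per critical point, orientations inherited from stable manifolds), but the main step you yourself flag as the obstacle is a genuine gap, and it is precisely the point where the paper's proof takes a different and more elementary route. You propose to produce a $C^1$-triangulation $T$ subordinate to the global stable decomposition, i.e.\ such that each $\overline{W^s(p)}$ is a subcomplex. This is problematic: under the Morse--Smale hypothesis $\overline{W^s(p)}$ is not a submanifold in general (it is a union of $W^s(p)$ with lower-dimensional stable manifolds meeting it in complicated ways), the fact that the $W^s(p)$ assemble into a CW-decomposition of $M$ is itself a nontrivial theorem (Thom/Latour/Laudenbach) rather than a "classical fact" one can simply invoke, and even granting it the resulting CW-complex need not be regular, so "citing the standard existence of $C^1$-triangulations compatible with a given stratification" does not apply. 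Your closing suggestion to do an induction over increasing index with mapping-cylinder arguments is morally the right fix, but it is not fleshed out and is not what your main construction rests on.

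The paper avoids all of this by never looking at the global closures $\overline{W^s(p)}$. It first replaces $f$ by a self-indexed generic Morse function via the Rearrangement Theorem (a step your proposal omits but would also need for a clean increasing-index induction), and then cuts $M$ along regular level sets into elementary cobordisms, each containing the critical points of a single index. Inside one such cobordism the stable manifold of a critical point is an honest embedded disk $D^k$, so one can take a tubular neighbourhood $D^k\times D^{n-k}$, triangulate it explicitly using product-of-simplices subdivisions (Lemma~\ref{lemma:product-of-simplex}, Lemma~\ref{lemma:product-simplex-nxm}), and push that triangulation along flow lines to the boundary of the cobordism; the product cobordisms in between are triangulated and collapsed using Lemma~\ref{lemma:gluing-cobordisms}, which relies on Whitehead's common-subdivision theorem for $C^1$-triangulations. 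Crucially, the paper introduces the \emph{ancestor's property} (Definition~\ref{def:ancestor-property}) to guarantee that, cobordism by cobordism, every top-codimension cell at the bottom has a $V$-path ancestor at the top; this is the concrete device that makes your item~(2) work — it gives the bijection between $V$-paths from hyperfaces of $\sigma_p$ to $\sigma_q$ and unparametrised flow lines from $q$ to $p$, by tracking where each flow line crosses the level-set subcomplexes. Your sketch of (2) ("threads, arrow by arrow, through the simplices of $\overline{W^s(q)}$") gestures at this but does not supply a mechanism that prevents extraneous $V$-paths or accounts for the cells where the flow line meets intermediate level sets. Your treatment of (3) (orient $\sigma_p$ by the stable manifold, compare signs) agrees with the paper's.

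In short: your overall heuristic (triangulate adapted to stable manifolds, collapse along the flow, match orientations) is exactly what the paper is doing, but the specific route you propose — a global triangulation subordinate to the stable decomposition — does not obviously exist, and that is the whole difficulty. The paper's cobordism-by-cobordism construction, with its explicit product triangulations and the ancestor's property, is the machinery needed to make the idea rigorous.
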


Throughout this section, we follow conventions of Milnor (\cite{Milnor1}, \cite{Milnor2}).

\subsection{Smooth Morse theory}
Let $M$ be a smooth closed oriented Riemannian manifold of dimension $n$.
Given a smooth function $f:M\rightarrow \mathbb{R}$, a point $p\in M$ is said critical if $Df(p)=0$.
Let $Crit(f)$ be the set of critical points.
At a critical point $p$, we consider the bilinear form $D^2 f(p)$.
The number of negative eighenvalues of $D^2 f(p)$ is called the index of $p$ (denoted $ind(p)$).
We denote $Crit_k(f)$ the set of critical points of index $k$.

\begin{definition}
 A smooth map $f:M\rightarrow \mathbb{R}$ is called a Morse function if at each critical point $p$ of $f$, $D^2 f(p)$ is non-degenerate.
\end{definition}

More generally, a Morse function on a (smooth) cobordism $(M;M_0,M_1)$ is a smooth map $f:M\rightarrow [a,b]$ such that
\begin{enumerate}
 \item $f^{-1}(a)=M_0$, $f^{-1}(b)=M_1$,
 \item all critical points of $f$ are interior (lie in $M-(M_0 \cup M_1)$) and are non-degenerate.
\end{enumerate}

For technical reasons, we must consider the following object:
\begin{definition}
Let $f$ be a Morse function on a cobordism $(M^n;M_0,M_1)$.
A vector field $v$ on $M^n$ is a gradient-like vector field for $f$ if
\begin{enumerate}
 \item $v(f)>0$ throughout the complement of the set of critical points of $f$,
 \item given any critical point $p$ of $f$ there is a Morse chart in a neighbourhood $U$ of $p$ so that
$$
f(x) = f(p) - \sum_{i=1}^{k}x_i^2 + \sum_{i=k+1}^{n}x_i^2
$$
 and $v$ has coordinates $v(x) = (-x_1,\ldots,-x_k,x_{k+1},\ldots,x_n)$.
\end{enumerate}
\end{definition}

Given any Morse function, there always exists a  gradient-like vector field (see \cite{Milnor2}).
In the following, we shall abreviate ``gradient like vector field'' by ``\emph{gradient}''.
Thus, when needed, we will assume that we have chosen one.

Given any $x_0\in M$, we consider the following Cauchy problem
\begin{center}
$\left\lbrace 
\begin{tabular}{rcl}
 $\gamma'(t)$ & $=$ & $v(\gamma(t))$ \\
$\gamma(0)$ & $=$ & $x_0$ \\
\end{tabular}
\right.$ 
\end{center}
and call integral curve (denoted $\gamma_{x_0}$) the solution of this Cauchy problem.
The stable manifold of a critical point $p$ is by definition the set $W^s (p,v):=\{x\in M \vert \lim\limits_{t\rightarrow +\infty} \gamma_x(t)=p\}$.
The unstable manifold of a critical point $p$ is by definition the set $\{x\in M \vert \lim\limits_{t\rightarrow -\infty} \gamma_x(t)=p\}$.
When stable and unstable manifolds are transverse (this is called Morse-Smale condition), we called $v$ a Morse--Smale gradient: such gradient always exists in a neighbourhood of a gradient (see e.g. \cite{pajitnov1}).
We shall call a Morse function $f$ generic if we have chosen for $f$ a Morse--Smale gradient.

To define the smooth Thom-Smale complex we need the following data:
\begin{enumerate}
 \item[$\bullet$] a generic Morse function $f$,
 \item[$\bullet$] an orientation of each stable manifold.
\end{enumerate}

Under these conditions, the number of integral curves of $v$ up to renormalization (that is $\gamma_x \sim \gamma_y$ iff there exists $t\in \mathbb{R}$ such that $\gamma_x(t) = y$) connecting two critical points of consecutive index is finite.
Moreover, when we consider an integral curve from $q$ to $p$ where $ind(p)=ind(q)+1$, it carries a coorientation induced by the orientation of the stable manifold and the orientation of the integral curve.
One can move this coorientation from $p$ to $q$ along the integral curve and compare it with the orientation of the stable manifold of $q$.
This gives the sign which is carried by the integral curve connecting $q$ to $p$.

The Thom-Smale complex $(C^f_*,\partial^f)$ is defined as:
\begin{enumerate}
 \item $C^f_k = \bigoplus_{p\in Crit_k(f)} \mathbb{Z}.p$,
 \item if $p \in Crit_k(f)$ then $\partial p = \sum_{q\in Crit_{k-1}(f)} n(p,q).q$ where $n(p,q)$ is the algebraic number of integral curves up to renormalization connecting $q$ to $p$.
\end{enumerate}

\begin{theorem}
 The homology of the Thom-Smale complex is equal to the singular homology of $M$.
\end{theorem}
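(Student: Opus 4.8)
The plan is to realize the Thom--Smale complex as the cellular chain complex of a CW structure on $M$ adapted to $f$, and then to invoke the classical identification of cellular homology with singular homology. Concretely, consider the sublevel sets $M^{a}=f^{-1}((-\infty,a])$. After a $C^{\infty}$-small perturbation of $f$ we may assume its critical values are pairwise distinct (this changes neither the Thom--Smale complex nor $H_{*}(M)$), and by the rearrangement theorem of \cite{Milnor2} we may further assume critical points of lower index sit at lower levels of $f$. The fundamental lemmas of Morse theory (\cite{Milnor1}) then give: $M^{a}$ deformation retracts onto $M^{b}$ whenever $[b,a]$ contains no critical value; and if $c=f(p)$ with $p\in Crit_{\lambda}(f)$, then for small $\varepsilon>0$ one has $M^{c+\varepsilon}\simeq M^{c-\varepsilon}\cup e_{p}$, where $e_{p}$ is a $\lambda$-cell attached along the stable sphere $W^{s}(p,v)\cap f^{-1}(c-\varepsilon)$ read off from the Morse chart at $p$. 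Iterating over all critical values yields a CW complex $X\simeq M$ with exactly one $\lambda$-cell $e_{p}$ per $p\in Crit_{\lambda}(f)$, with $\lambda$-skeleton $X^{(\lambda)}$ the corresponding sublevel set. The chosen orientations of the stable manifolds orient these cells, so $C^{CW}_{k}=C^{f}_{k}$ as graded abelian groups, and $H_{*}(M)\cong H_{*}(X)$ is the homology of the cellular complex $(C^{CW}_{*},\partial^{CW})$.

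It then remains to identify $\partial^{CW}$ with $\partial^{f}$. Fix $p\in Crit_{k}(f)$ and $q\in Crit_{k-1}(f)$. By definition $[\partial^{CW}e_{p}:e_{q}]$ is the degree of the composite
$$S^{k-1}=\partial D^{k}\;\xrightarrow{\text{att.}}\;X^{(k-1)}\;\longrightarrow\;X^{(k-1)}/X^{(k-2)}=\bigvee_{q'\in Crit_{k-1}(f)}S^{k-1}_{q'}\;\longrightarrow\;S^{k-1}_{q},$$
the last map being the projection onto the $q$-summand. Geometrically the attaching map is the stable sphere $\partial e_{p}=W^{s}(p,v)\cap f^{-1}(c-\varepsilon)$, and the collapse onto the $q$-summand crushes everything outside the open cell $e_{q}$, whose centre is the critical point $q$. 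Under the Morse--Smale transversality hypothesis the set of integral curves from $q$ to $p$, taken up to renormalization, is zero-dimensional and hence finite by a standard compactness argument. Each such curve meets $f^{-1}(c-\varepsilon)$ in one point of $\partial e_{p}$ and, flowed down, approaches $q$; this sets up a bijection between the preimage of $q$ under the displayed composite and the integral curves of $v$ up to renormalization connecting $q$ to $p$. Moreover the local degree of the composite at such a point equals the sign the corresponding curve carries in the definition of $n(p,q)$, namely the comparison of the coorientation of the curve transported from $p$ to $q$ with the orientation of $W^{s}(q,v)$. Summing local degrees yields $[\partial^{CW}e_{p}:e_{q}]=n(p,q)$, so $\partial^{CW}=\partial^{f}$ and therefore $H_{*}(C^{f}_{*},\partial^{f})\cong H_{*}(X)\cong H_{*}(M)$.

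I expect the last step --- the local-degree computation --- to be the \emph{main obstacle}: one must verify that the local degree at a trajectory equals the orientation sign appearing in $\partial^{f}$, and not its opposite. This needs the compactness of the trajectory spaces (so that the attaching map may be homotoped to one transverse to the centre of $e_{q}$ with exactly the expected preimages) together with a careful orientation bookkeeping carried out in the conventions of \cite{Milnor1}, \cite{Milnor2}. A convenient way to organise it is to argue one critical value at a time with the pair $(M^{c+\varepsilon},M^{c-\varepsilon})$, using the Thom isomorphism for the normal bundle of the stable disk of $q$ inside a Morse chart, so that transversality exhibits the intersection of $\partial e_{p}$ with that disk as a finite set of transverse points, each contributing $\pm1$. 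Alternatively, since the result is classical, one may cite this identification directly from the standard treatments of Morse homology (e.g.\ \cite{pajitnov1}).
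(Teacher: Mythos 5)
Your proposal is correct and follows essentially the route the paper points to: the paper offers no in-line proof, deferring to Milnor's \emph{Lectures on the h-cobordism theorem}, where the result is obtained precisely by building the CW/handle decomposition from sublevel sets of a self-indexing Morse function and identifying the cellular boundary coefficient with the algebraic count of flow lines (equivalently, intersection numbers of attaching and belt spheres in a level set). Your flagged ``main obstacle'' --- matching the local degree at a trajectory to the sign conventions of $\partial^{f}$ --- is indeed the genuinely technical step in Milnor's account as well, and your suggested Thom-isomorphism bookkeeping in a Morse chart is a clean way to carry it out.
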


The proof of this theorem can be extracted from \cite{Milnor2}.

\subsection{Elementary cobordisms}
In this subsection, we will prove that we can realize combinatorially the smooth Thom-Smale complex of any elementary cobordisms.
Thus, by cutting the manifold $M$ into elementary cobordism we will obtain the first part of Theorem \ref{theo:combinatorial-realization}: there exists a bijection between the set of critical cells and the set of critical points.

We will only consider $C^1$-triangulation of manifolds for technical reasons (see \cite{Whitehead}).
So, whenever we use the word triangulation it means $C^1$-triangulation.
A triangulation of a $n+1$-cobordism $(M^{n+1};M_0,M_1)$ is a triplet $(T;T_0,T_1)$ such that $T$ is a $C^1$-triangulation of $M$, $T_0$ (resp. $T_1$) is a subcomplex of $T$ which is a $C^1$-triangulation of $M_0$ (resp. $M_1$).

A combinatorial Morse vector field $V$ on a triangulated $n+1$-cobordism $(T;T_0,T_1)$ is a combinatorial Morse vector field on $T$ such that no cells of $T_1$ is critical and every cell of $T_0$ is critical.

\begin{definition}\label{def:ancestor-property}
 Let $V$ be a combinatorial Morse vector field on a triangulated $n+1$-cobordism $(T;T_0,T_1)$.
$V$ satisfies the ancestor's property if given any $n$-cell $\sigma_0 \in T_0$, there exists an $n$-cell $\sigma_1 \in T_1$ and a $V$-path starting at $\sigma_1$ and ending at $\sigma_0$.
\end{definition}

\begin{remark}
There is a key difference between integral curves up to renormalization of a gradient $v$ and $V$-paths.
Given a point $x\in M$, there is only one solution to the Cauchy problem.
Moreover, the past and the future of a point pushed along the flow is uniquely determined.
\emph{A contrario} given a cell $\sigma$, there are (in general) many $V$-paths starting at $\sigma$.
The ancestor's property caracterises $n+1$-cobordism equipped with a combinatorial Morse vector field which knows its history in maximal dimension $(n,n+1)$.
\end{remark}

To prove that elementary cobordism can be realized , we need a combinatorial description of being a deformation retract.
Let $X$ be a simplicial complex and $\sigma$ be an hyperface of $\tau$ which is free (that is $\sigma$ is a face of no other cell).
In this case, we say that $X$ collapses to $X-(\sigma\cup \tau)$ by an elementary collapsing and write $X \searrow X-(\sigma\cup \tau)$.
A collapsing is a finite sequence of such elementary collapsings.
In particular, a collapsing defines a matching on the Hasse diagram of the simplicial complex.
Moreover, one can prove that $X-(\sigma\cup\tau)$ is a deformation retract of $X$.

\begin{proposition}
 Let $X$ be a simplicial complex and $X_0$ be a subcomplex.
Suppose $X\searrow X_0$.
Then the matching given by this collapsing defines a combinatorial Morse vector field whose set of critical cells is the set of cells of $X_0$.
\end{proposition}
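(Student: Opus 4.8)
The plan is to unwind the collapsing into its elementary steps, read off the matching on the Hasse diagram, and then certify the Morse condition by a monotonicity argument on the order in which cells disappear.

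First I would write the collapsing as a chain of elementary collapsings
$$X = X^{(0)} \searrow X^{(1)} \searrow \cdots \searrow X^{(m)} = X_0,$$
where $X^{(j)} = X^{(j-1)} - (\sigma_j\cup\tau_j)$ and $\sigma_j$ is a free hyperface of $\tau_j$ inside $X^{(j-1)}$. Set $\mathcal M = \{(\sigma_j,\tau_j) : 1\le j\le m\}$, each edge oriented from $\sigma_j$ toward $\tau_j$. Since $X^{(j)}$ is obtained from $X^{(j-1)}$ by deleting exactly the two cells $\sigma_j$ and $\tau_j$, the cells occurring in these pairs are pairwise distinct and their union is precisely $X\setminus X_0$; hence $\mathcal M$ is a matching on the Hasse diagram whose critical cells are exactly the cells of $X_0$. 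As $\dim\sigma_j+1=\dim\tau_j$, condition (2) of the definition of a combinatorial vector field holds, so $\mathcal M$ defines a combinatorial vector field $V$ with $V(\sigma_j)=\tau_j$.

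It remains to show that $V$ has no non-stationary closed $V$-path. For a matched cell $\alpha$ let $s(\alpha)\in\{1,\dots,m\}$ be the step at which $\alpha$ is removed; both members of a pair $(\sigma_j,\tau_j)$ are removed at step $j$. The key claim is: if $\rho,\rho'$ are consecutive cells of a $V$-path (so $\rho\ne\rho'$ and $\rho' < V(\rho)$), then $s(\rho) < s(\rho')$. Write $\tau=V(\rho)$ and $\tau'=V(\rho')$, removed at steps $s(\rho)$ and $s(\rho')$. First $\tau\ne\tau'$, for otherwise $\tau$ would belong to the two distinct matching edges $(\rho,\tau)$ and $(\rho',\tau)$. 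Now at step $s(\rho')$ the cell $\rho'$ is a free hyperface of $\tau'$ in $X^{(s(\rho')-1)}$, meaning $\tau'$ is the only cell of $X^{(s(\rho')-1)}$ having $\rho'$ as a proper face. But $\rho' < \tau$ with $\tau\ne\tau'$, so $\tau\notin X^{(s(\rho')-1)}$, i.e. $\tau$ was removed before step $s(\rho')$; thus $s(\rho)\le s(\rho')-1 < s(\rho')$, proving the claim. Applying it along a closed $V$-path $\sigma_0,\sigma_1,\dots,\sigma_r=\sigma_0$ gives $s(\sigma_0)<s(\sigma_1)<\cdots<s(\sigma_r)$, while $\sigma_r=\sigma_0$ forces $s(\sigma_r)=s(\sigma_0)$ — a contradiction unless $r=0$. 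Hence $V$ is a combinatorial Morse vector field.

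The main obstacle is the claim in the third paragraph: one has to handle the freeness bookkeeping carefully, making sure that ``free hyperface'' is used in the precise sense that, at the moment $\rho'$ is collapsed, it has a unique proper coface among the cells still present, which is exactly what forces the (distinct) cell $\tau$ to have disappeared earlier. Everything else — the fact that $\mathcal M$ is genuinely a matching, that the critical cells are the cells of $X_0$, and that the orientation convention is satisfied — is a direct bookkeeping of the elementary collapsings.
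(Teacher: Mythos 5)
Your proof is correct and takes the same approach as the paper, which simply observes that ``since elementary collapsings are performed by choosing a free hyperface of a cell, there is no non-stationary closed path.'' You make that terse remark rigorous by introducing the removal-step function $s$ and proving the strict monotonicity of $s$ along $V$-paths; this is exactly the bookkeeping the paper's one-line proof is implicitly relying on.
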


\begin{proof}
 The only thing to check is that there is no non-stationary closed path.
Since elementary collapsings are performed by choosing a free hyperface of a cell, there is no non-stationary closed path.
\end{proof}

Let $\Delta^m = (a_0,\ldots,a_m)$ be the standard simplex of dimension $m$.
The cartesian product $X=\Delta^m\times\Delta^n$ is the cellular complex whose set of cells is $\{\mu\times \nu\}$ where $\mu$ (resp. $\nu$) is a cell of $\Delta^m$ (resp. $\Delta^n$) (see \cite{Whitney1}).

\begin{proposition}[{\cite[Prop. 2.9]{RourkeSanderson}}]\label{proposition:cartesian-product}
 The cartesian product $\Delta^m \times \Delta^n$ has a simplicial subdivision without any new vertex.
More generally, the cartesian product of two simplicial complexes has a simplicial subdivision without any new vertex.
\end{proposition}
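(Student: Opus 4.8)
The plan is to write down the classical ``staircase'' subdivision of a product of two simplices, verify by hand that it is a simplicial complex subdividing the product with no new vertices, observe that it is natural with respect to faces, and then glue the per-cell subdivisions together to treat arbitrary complexes. Fix once and for all a total order on the vertices of each factor; for $\Delta^m = (a_0,\ldots,a_m)$ and $\Delta^n = (b_0,\ldots,b_n)$ take $a_0 < \cdots < a_m$ and $b_0 < \cdots < b_n$. Let $P_{m,n}$ be the poset $\{0,\ldots,m\}\times\{0,\ldots,n\}$ with the coordinatewise partial order, and to each chain $C = \{(i_0,j_0) < (i_1,j_1) < \cdots < (i_k,j_k)\}$ in $P_{m,n}$ associate the geometric simplex $\sigma_C \subset \Delta^m\times\Delta^n$ with vertex set $\{(a_{i_t},b_{j_t}) : 0\le t\le k\}$. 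I claim that $\mathcal S_{m,n} = \{\sigma_C : C \text{ a chain in } P_{m,n}\}$ is a simplicial subdivision of $\Delta^m\times\Delta^n$ whose $0$-simplices are exactly the pairs $(a_i,b_j)$, that is, the vertices of the product cell structure, so that no new vertex is introduced. (Concretely, the maximal chains are the monotone lattice paths from $(0,0)$ to $(m,n)$, and they index the top-dimensional $(m+n)$-simplices.)

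Three points then need checking. First, each $\sigma_C$ is nondegenerate: the vertices $(a_{i_t},b_{j_t})$ are affinely independent, which is where the chain condition enters --- between consecutive elements of $C$ at least one of the two indices strictly increases while neither decreases, and this forces the vectors $(a_{i_t}-a_{i_0},\,b_{j_t}-b_{j_0})$, $1\le t\le k$, to be linearly independent by a triangular argument on the coordinates. Second, $\mathcal S_{m,n}$ is closed under passing to faces and any two of its members meet in a common face: the faces of $\sigma_C$ are precisely the $\sigma_{C'}$ with $C'\subseteq C$, and one checks $\sigma_C \cap \sigma_{C'} = \sigma_{C\cap C'}$ (note $C\cap C'$ is again a chain). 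Third, $\bigcup_C \sigma_C = \Delta^m\times\Delta^n$: given $(x,y)$ with barycentric coordinates $(x_i),(y_j)$, form the partial sums $X_i = x_0+\cdots+x_i$ and $Y_j = y_0+\cdots+y_j$ and interleave the two nondecreasing sequences $X_0,\ldots,X_{m-1}$ and $Y_0,\ldots,Y_{n-1}$ inside $[0,1]$; the relative positions of the $X_i$'s and $Y_j$'s encode a monotone lattice path, hence a maximal chain $C$ in $P_{m,n}$, and the lengths of the pieces of the resulting partition of $[0,1]$ are exactly the barycentric coordinates of $(x,y)$ in $\sigma_C$. This computation simultaneously yields the cover and re-proves the intersection property, and it also identifies $\mathcal S_{m,n}$ with a geometric realization of the order complex of $P_{m,n}$.

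It remains to glue. The construction is natural: for faces $\mu'\le\mu$ and $\nu'\le\nu$ of ordered simplices, the subdivision of $\mu\times\nu$ restricts on $\mu'\times\nu'$ to the subdivision of $\mu'\times\nu'$ built from the induced orders, since both are given by the same recipe applied to the corresponding product of index sets. Hence, given simplicial complexes $X$ and $Y$, choose total orders on their vertex sets (restricting to total orders on each simplex) and apply $\mathcal S$ to every product cell $\mu\times\nu$ with $\mu$ a simplex of $X$ and $\nu$ a simplex of $Y$; by naturality these fit together into a single simplicial subdivision of $X\times Y$ with vertex set $V(X)\times V(Y)$, that is, with no new vertices, which is the assertion. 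The main obstacle is the middle step: establishing that the $\sigma_C$ fit together face-to-face and tile the product. Once everything is rephrased combinatorially in terms of the poset $P_{m,n}$ and the partial-sum/interleaving encoding this is essentially bookkeeping, but it is the real content of the statement, and it is precisely what \cite[Prop.~2.9]{RourkeSanderson} provides, so in the text this verification can be referred to there.
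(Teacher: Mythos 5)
Your argument is correct, but the paper does not actually prove this proposition — it is stated with the citation to Rourke--Sanderson, Prop.~2.9, and no proof is given. What you have written out is the classical ``staircase'' triangulation, i.e.\ the geometric realization of the order complex of the product poset $\{0,\ldots,m\}\times\{0,\ldots,n\}$, with the interleaved-partial-sums argument for the covering and face-to-face properties and naturality over faces for the gluing step; this is precisely the standard proof that the cited reference supplies (Rourke--Sanderson phrase it in the language of cells and ordered triangulations, but the construction is the same), so you have not taken a different route — you have reconstructed the proof that the paper delegates to the reference.
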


\begin{lemma}\label{lemma:product-of-simplex}
 Let $X_1=\Delta^k$ be the standard simplicial complex of dimension $k$ and $X_0$ be a simplicial subdivision of $X_1$.
Consider the CW-complex which is equal to the cartesian product $\Delta^k \times\Delta^1$ and where we subdivide $\Delta^k\times \{0\}$ so that it is equal to $X_0$.
Then, there exists a simplicial subdivision $X$ of this CW-complex such that $X_{\vert \Delta \times \{i\} } = X_i$ for $i\in \{0,1\}$.

Moreover for $i\in \{0,1\}$ there exists a collapsing $X \searrow X_i$ and the combinatorial Morse vector field associated $V_i$ satifies the ancestor's property on $(X;X_{\overline{i}},X_{\overline{i+1}})$ ($\overline{j}$ is the class in $\mathbb{Z}/2\mathbb{Z}$).
\end{lemma}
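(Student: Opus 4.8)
The plan is to construct the triangulation $X$ first, then the two collapsings together with their combinatorial Morse vector fields, and finally to check the ancestor's property (Definition \ref{def:ancestor-property}) for each of them. For the construction of $X$, note that the CW-complex of the statement has underlying space $|\Delta^k|\times[0,1]$, with the triangulation $X_0$ on the bottom facet $|\Delta^k|\times\{0\}$ and the trivial triangulation $X_1=\Delta^k$ on the top facet $|\Delta^k|\times\{1\}$, and that these two facets are disjoint subcomplexes. Applying Proposition \ref{proposition:cartesian-product} to $\Delta^k\times\Delta^1$ gives a simplicial triangulation of the prism without new vertices, hence one whose two end facets are both $\Delta^k$; then extending the subdivision $X_0$ of its bottom facet to a subdivision of the whole prism (a standard relative subdivision, see \cite{RourkeSanderson}, which can be kept away from the top facet) yields a simplicial $X$ with $X_{\vert \Delta\times\{0\}}=X_0$ and $X_{\vert \Delta\times\{1\}}=X_1$.

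The two collapsings should be modelled on the vertical vector fields $-\partial_t$ and $\partial_t$ of the prism. To collapse $X\searrow X_0$ one peels the $(k+1)$-simplices off from the top downwards, starting with the free top facet $\Delta^k\times\{1\}$ and its unique coface, and never removing a cell of $X_0$; the resulting matching $V_0$ has exactly the cells of $X_0$ as critical cells, so it is a combinatorial Morse vector field on the cobordism $(X;X_0,X_1)$. Symmetrically, peeling from the bottom upwards realizes $X\searrow X_1=\Delta^k$ and gives $V_1$ with the cells of $X_1$ as critical cells, a combinatorial Morse vector field on $(X;X_1,X_0)$. To make this precise one should actually build $X$ with a layered structure --- realizing the passage from $\Delta^k$ to $X_0$ by a finite sequence of elementary subdivisions, each contributing one thin slab $|\Delta^k|\times[\tfrac{j}{N},\tfrac{j+1}{N}]$ of the prism in which both collapsing directions are explicit, and then stacking the slabs. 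I expect arranging this layered structure, and checking that the slab-by-slab collapses glue to honest collapses of $X$, to be the main technical point.

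For the ancestor's property, note that here $n=k$, so the cells concerned are $k$-cells, and $X_1=\Delta^k$ has only the single $k$-cell $\Delta^k\times\{1\}$. For $V_0$ on $(X;X_0,X_1)$ one must exhibit, for \emph{every} $k$-cell $\sigma_0$ of $X_0$, a $V_0$-path from $\Delta^k\times\{1\}$ to $\sigma_0$: this holds because $V_0$ is by construction the downward vertical flow, so a $V_0$-path issued from the top $k$-cell branches through the interior $k$-cells of $X$ and can be steered to any prescribed $k$-cell of the bottom subdivision $X_0$ (already visible for $k=1$, with $X$ a triangulated square whose bottom edge is subdivided). For $V_1$ on $(X;X_1,X_0)$ one only needs \emph{one} $k$-cell $\sigma_1$ of $X_0$ with a $V_1$-path to the single $k$-cell $\Delta^k\times\{1\}$ of the incoming boundary $X_1$, which is immediate since the upward vertical flow $V_1$ converges to that top $k$-cell. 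The real obstacle is not either verification in isolation but coordinating them with the construction of the collapses: a careless collapse onto the bottom facet need not induce a $V_0$-flow that reaches every $k$-cell of $X_0$, so the layered construction of $X$, $V_0$ and $V_1$ must be set up precisely so that the induced combinatorial flows genuinely mimic $-\partial_t$ and $\partial_t$.
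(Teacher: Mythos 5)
There is a genuine gap, and you have in fact flagged it yourself: the construction on which everything rests is announced but never carried out. Your first sentence about $X$ — taking the subdivision of $\Delta^k\times\Delta^1$ without new vertices and then performing a relative subdivision extending $X_0$ — produces \emph{a} simplicial $X$, but nothing guarantees that such an $X$ collapses onto $X_0$ or onto $X_1$. Collapsibility onto a prescribed face is not automatic for an arbitrary triangulation of a ball, and ``peeling off the $(k+1)$-simplices from the top downwards'' is not a well-defined operation on a generic subdivision: after the first elementary collapse you may simply run out of free faces, or the free faces available may force you to remove cells of $X_0$. Similarly, even when a collapse onto $X_0$ exists, its induced matching need not have $V_0$-paths from $\Delta^k\times\{1\}$ reaching \emph{every} $k$-cell of $X_0$, which is exactly the ancestor's property. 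You correctly diagnose both difficulties and propose to fix them by a ``layered slab'' construction realizing $X_0$ as the end of a chain of elementary subdivisions of $\Delta^k$; but you do not actually build the slabs, do not show that an elementary subdivision of $\Delta^k$ can be realized by a triangulated prism with the two required collapses, and do not show the slab collapses compose. That is precisely where the whole content of the lemma lives.

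The paper avoids all of this with a single inductive cone construction. One chooses a point $x$ in the interior of the prism; by induction on $k$ one subdivides the lateral boundary $\partial\Delta^k\times\Delta^1$ compatibly; together with $X_0$ and $X_1$ this gives a subdivision $Z$ of $\partial(\Delta^k\times\Delta^1)$, and one sets $X=Z*\{x\}$. The collapse $X\searrow X_0$ is then produced in three explicit steps: first collapse the free face $\Delta^k\times\{1\}$ into its cone $\Delta^k\times\{1\}*\{x\}$; then cone over $x$ the inductively given collapse of the lateral boundary onto $\partial\Delta^k\times\{0\}$; finally collapse $X_0*\{x\}$ onto $X_0$ using a collapse $X_0\searrow\{y\}$ for a vertex $y$ of the original simplex. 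The ancestor's property falls out of this step-by-step construction rather than having to be imposed afterwards. Your high-level reading of what must be proved — and why a careless triangulation fails — is accurate; to complete the proof along your lines you would still need to supply the layered construction, at which point the paper's coning argument is both shorter and easier to keep track of, particularly for the ancestor's property.
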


\begin{proof}
 The simplicial subdivision and the collapsing is constructed by induction on $k$.
If $k=0$, choose a new vertex in the interior of the simplex $\Delta^0\times  \Delta^1$ and the elementary collapsing $\Delta^1 \searrow \{i\}$ gives the two collapsing $\Delta^0 \times \Delta^1 \searrow \Delta^0 \times \{i\}$ for $i\in \{0,1\}$.
In particular, the corresponding combinatorial Morse vector field satifies the ancestor's property.

Suppose the lemma is true until rank $k-1$.
At rank $k$, let $Y$ be the corresponding CW-complex and $x$ be a point in the interior of the cell of dimension $k+1$.
By induction hypothesis $Y_{\vert \partial \Delta^k \times \Delta^1}$ admits a simplicial subdivision.
Therefore, $Y_{\vert \partial(\Delta^k \times \Delta^1) }$ admits a simplicial subdivision denoted $Z$ (just add the simplexes $\Delta^k \times \{i\}$ which are equal to $X_i$ for $i\in\{0,1\}$).
The simplicial subdivision $X$ is given by making the join of the simplicial subdivision of the boundary over $\{x\}$: $X = Z * \{x\}$.

Now, the collapsing $X \searrow X_0$ is performed in three steps.
\begin{itemize}
 \item[\textbf{Step 1.}] The cell $\sigma \in X_1$ of dimension $k$ is the free hyperface of the cell $\sigma *\{ x \}$.
We do the following elementary collapsing:
\begin{equation}\label{eq:collapsing-1-product-of-simplex}
 X \searrow X -(\sigma \cup \sigma*\{x\})
\end{equation}
 \item[\textbf{Step 2.}] By induction hypothesis, $X_{ \vert \partial \Delta^k \times \Delta^1 } \searrow X_{ \vert \partial \Delta^k \times \{0\} }$.
Performing the join over $x$ induces the following collapsing:
\begin{equation}\label{eq:collapsing-2-product-of-simplex}
 X_{ \vert \partial \Delta^k \times \Delta^1 }*\{x\}  \searrow X_{ \vert \partial \Delta^k \times \{0\} } *\{x\}
\end{equation}
 \item[\textbf{Step 3.}] It remains to collapse $X_0 *\{x\}$ on $X_0$.
Let $y$ be a vertex in $X_0$ which is a vertex of the original simplex $X_1$.
Since $X_0$ is a simplicial subdivision of $\Delta^k$, there exists a collapsing $X_0 \searrow \{y\}$.
This collapsing gives the following collapsing:
\begin{equation}\label{eq:collapsing-3-product-of-simplex}
 X_0 * \{x\} \searrow X_0 \cup (\{y\}\times \{0\})* \{x\} \searrow X_0
\end{equation}
\end{itemize}

Combining collapsings \eqref{eq:collapsing-1-product-of-simplex}, \eqref{eq:collapsing-2-product-of-simplex} and \eqref{eq:collapsing-3-product-of-simplex} gives $X\searrow X_0$.
The corresponding combinatorial Morse vector field satisfies the ancestor's property by construction.

The collapsing $X \searrow X_1$ is constructed in the same way and conclusions of lemma follows.
\end{proof}

\begin{remark}
The proof of lemma \ref{lemma:product-of-simplex} is by induction.
Let $\delta^{(j)}$ be the $j$-th skeleton of $\Delta^k$.
Denote by $X^{(j)}$ (resp. $X^{(j)}_i$) the simplicial complex $X_{\vert \delta^{(j)} \times \Delta^1}$ (resp. $(X_{i})_{\vert \delta^{(j)} \times \Delta^1}$).
For $i\in \{0,1\}$, the collapsing $X\searrow X_i$ can be restricted to $X^{(j)}\searrow X^{(j)}_i$ for any $0\leq j \leq k$ and the induced combinatorial Morse vector field satisfies the ancestor's property.
\end{remark}

The next two lemmas are technical lemmas.
The first one is the basic tool to glue together triangulated cobordisms.
The second one will be useful to construct a combinatorial realization of a cobordism with exactly one critical point and is a generalization of lemma \ref{lemma:product-of-simplex}.

\begin{lemma}\label{lemma:gluing-cobordisms}
 Let $(T_i^M,T_i^N)$ be two $C^1$-triangulations of the pair $(M,N)$ where $N^{k}$ is a submanifold (possibly with boundary) of $M^n$ ($k\leq n$).
Then, there exists a $C^1$-triangulation $T$ of $(M\times [0,1],N\times[0,1])$ such that
$$(T_{ \vert M\times\{i\} } , T_{ \vert N\times\{i\} }) = (T_i^M,T_i^N)$$
for $i\in \{0,1\}$ and 2 collapsings
\begin{equation}
 T \searrow T_0^M \cup T_{\vert N\times [0,1]}
\end{equation}
\begin{equation}
 T_{\vert N\times[0,1]} \searrow T_0^N
\end{equation}
Moreover, the induced combinatorial Morse vector fields $V$ satifies the ancestor's property on the cobordisms $(T_{\vert N\times[0,1]};T_0^N,T_1^N)$ and $ (T;T_0^M,T_1^M) $.
\end{lemma}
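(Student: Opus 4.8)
The plan is to reduce the statement to Lemma \ref{lemma:product-of-simplex} by working simplex by simplex on the triangulation $T_0^M$, using the extra structure provided by the subcomplex $N$. First I would fix a common simplicial subdivision: both $T_0^M$ and $T_1^M$ triangulate $M$, and $T_0^N$, $T_1^N$ are subcomplexes triangulating $N$, so there is a simplicial complex $W$ that is a common subdivision of $T_0^M$ and $T_1^M$ whose restriction to $N$ is a common subdivision of $T_0^N$ and $T_1^N$ (by the standard theory of $C^1$-triangulations, cf. \cite{Whitehead}). Applying Lemma \ref{lemma:product-of-simplex} (and the remark following it, which allows the construction to be performed compatibly on skeleta) to each simplex of $T_0^M$, with the prescribed subdivision on the bottom face given by $W$, produces a $C^1$-triangulation $T$ of $M\times[0,1]$ with $T_{\vert M\times\{0\}}=T_0^M$, and after a further interpolation of the trivial product triangulation between $W$ and $T_1^M$ on the top, also $T_{\vert M\times\{1\}}=T_1^M$. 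Because Lemma \ref{lemma:product-of-simplex} is carried out cell by cell and is compatible under passing to faces, these local triangulations glue to a global $T$ whose restriction to $N\times[0,1]$ is exactly the triangulation produced by the same procedure applied to $T_0^N$; call this restriction $T_{\vert N\times[0,1]}$.

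Next I would build the two collapsings. Applying the second half of Lemma \ref{lemma:product-of-simplex} to each simplex of $T_0^N$ gives, after assembling, a collapsing $T_{\vert N\times[0,1]}\searrow T_0^N$; this is the second required collapsing, and the induced combinatorial Morse vector field satisfies the ancestor's property on $(T_{\vert N\times[0,1]};T_0^N,T_1^N)$ directly by that lemma. For the first collapsing, I would apply Lemma \ref{lemma:product-of-simplex} to every simplex of $T_0^M$ that is \emph{not} contained in $N$, leaving the cells over $N\times[0,1]$ untouched: performing these elementary collapsings in order of decreasing dimension (so that a free hyperface is always available, exactly as in Steps 1--3 of the proof of Lemma \ref{lemma:product-of-simplex}) collapses $T$ onto $T_0^M\cup T_{\vert N\times[0,1]}$. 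One must check that the elementary collapses coming from distinct simplices do not interfere — that a hyperface declared free in one simplex is genuinely free in the whole complex at the moment it is collapsed — but this is precisely the compatibility already verified in the remark after Lemma \ref{lemma:product-of-simplex}, since the collapsing restricts to skeleta. The ancestor's property on $(T;T_0^M,T_1^M)$ follows because it holds simplexwise (Lemma \ref{lemma:product-of-simplex}) and a $V$-path can be concatenated across adjacent top-dimensional cells: given an $n$-cell in $T_0^M$, pick the simplex of $T_0^M$ containing it, use the local ancestor's property to reach $T_1^M$ within that product region.

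The main obstacle I expect is the bookkeeping in the gluing step: ensuring that the simplexwise applications of Lemma \ref{lemma:product-of-simplex} agree on shared faces so that the global $T$, the global collapsing, and the global vector field are well defined. This is handled by choosing, once and for all, a coherent system of auxiliary vertices (the cone points $x$ in the proof of Lemma \ref{lemma:product-of-simplex}) indexed by the simplices of $T_0^M$ and a coherent ordering of the vertices, and by invoking the remark that the whole construction is natural under restriction to subcomplexes; the genuinely new content beyond Lemma \ref{lemma:product-of-simplex} is only the observation that one may \emph{stop} the collapsing along $N\times[0,1]$, which causes no problem because free hyperfaces used outside $N$ never lie in $N$. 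The interpolation on the top face between $W$ and $T_1^M$ (needed because Lemma \ref{lemma:product-of-simplex} only controls one end with an arbitrary subdivision) can be arranged by a second application of the same lemma to $\Delta^k\times\Delta^1$ with the roles of the two ends exchanged, or simply absorbed by choosing $W=T_1^M$ from the start if one first subdivides $T_1^M$ to refine $T_0^M$.
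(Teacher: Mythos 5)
Your proposal is correct and is essentially the paper's argument: take a common $C^1$-subdivision $W$ of $T_0^M$ and $T_1^M$ (compatible with $N$) via \cite{Whitehead}, interpolate $T_0^M$ to $W$ and $W$ to $T_1^M$ by two applications of Lemma \ref{lemma:product-of-simplex} (the paper does exactly this by cutting $[0,1]$ into $[0,1/2]$ and $[1/2,1]$), and obtain the two collapsings by restricting the Lemma-\ref{lemma:product-of-simplex} collapsing to $N\times[0,1]$ and separating out those matching edges. Your intermediate phrase ``a further interpolation of the trivial product triangulation between $W$ and $T_1^M$'' is inaccurate (there is no trivial product triangulation joining distinct subdivisions), but your final paragraph corrects this to a second application of Lemma \ref{lemma:product-of-simplex} with the ends exchanged, which is precisely what is needed.
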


\begin{proof}
First, suppose $N=\emptyset$.
Both triangulations $T_0$ and $T_1$ are $C^1$-triangulation of the same manifold therefore they have a common simplicial subdivision $T^{1/2}$ \cite{Whitehead} (this is where we use the fact that triangulations are $C^1$-triangulations).
Subdivide $\Delta^1 = [0,1]$ in two standard simplexes $[0,1/2]$ and $[1/2,1]$.
Lemma \ref{lemma:product-of-simplex} gives a $C^1$-triangulation of $M\times[0,1/2]$ (resp. $M\times[1/2,1]$) denoted $T^{[0,1/2]}$ (resp. $T^{[1/2,1]}$).
The union $T^{[0,1/2]}\cup T^{[1/2,1]}$ is a triangulation of $M\times[0,1]$ denoted $T$.
By construction, $T_{\vert M\times \{i\} } = T_i^M$ for $i\in \{0,1\}$ and we have the two following collapsings:
\begin{equation*}
 T^{[1/2,1]}\searrow T^{1/2}
\end{equation*}
\begin{equation*}
 T^{[0,1/2]}\searrow T^{0}
\end{equation*}
Composing these two collapsings give the desired collapsing and lemma \ref{lemma:product-of-simplex} give the ancestor's property.

In the case where the submanifold $N$ is non-empty, the construction above gives a triangulation $T$ of the pair $(M\times[0,1],N\times[0,1])$ and we have $(T_{\vert M\times\{i\} },T_{\vert N\times\{i\} }) = (T_i^M,T_i^N)$ for $i\in \{0,1\}$.
The collapsing $T\searrow T_0$ can be restricted to $T_{\vert N\times[0,1]}$.
We remove from the matching edges corresponding to $T_{N\times[0,1]}\searrow T_{N\times\{0\} }$ to obtain the desired collapsing.
Again lemma \ref{lemma:product-of-simplex} give the ancestor's property.
\end{proof}

\begin{lemma}\label{lemma:product-simplex-nxm}
 Let $(m,n)$ be a pair of positive integers.
Let $\Delta^n = (a_0,\ldots,a_n)$ be the standard simplex of dimension $n$ and $\delta^{n-1} = (\widehat{a}_0 ,\ldots,a_n)$ be the hyperface which does not contain $a_0$.
In particular $\Delta^n =  \{a_0\}*\delta^{n-1}$.
Then there exists a simplicial subdivision $X$ of the cartesian product $\Delta^m \times \Delta^n$ such that
\begin{itemize}
 \item[$\bullet$] $X_{\vert \Delta^m \times\delta^{n-1} }$ is a simplicial subdivision without any new vertex given by lemma \ref{proposition:cartesian-product},
 \item[$\bullet$] $X_{\vert \Delta^m \times\{a_0\} } = \Delta^m$,
 \item[$\bullet$] $X \searrow X_{ \vert( \partial \Delta^m \times \Delta^n ) \cup (\Delta^m \times\{a_0\}) }$.
\end{itemize}
Moreover, for each simplex $\Delta^1_i = (a_0,a_i)$ ($i\neq 0$),
\begin{itemize}
 \item[$\bullet$] $X_{\vert \Delta^m \times \Delta^1_i}$ coincides with the simplicial complex given by lemma \ref{lemma:product-of-simplex},
 \item[$\bullet$] the collapsing  $X \searrow X_{ \vert( \partial \Delta^m \times \Delta^n ) \cup (\Delta^m \times\{a_0\}) }$ restricted to $X_{\vert \Delta^m \times \Delta^1_i}$ coincides with the collapsing of lemma \ref{lemma:product-of-simplex},
 \item[$\bullet$] the induced combinatorial Morse vector field satisfies the ancestor's property on $(X_{\vert \Delta^m \times \Delta^1_i}; X_{\vert \Delta^m \times\{a_0\} } , X_{\vert \Delta^m \times\{a_i\} })$.
\end{itemize}
\end{lemma}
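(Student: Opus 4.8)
The plan is to imitate the coning construction in the proof of Lemma~\ref{lemma:product-of-simplex}, arguing by induction on $n$ and, for each $n$, by an auxiliary induction on $m$ (so in effect by induction on $m+n$). When $n=1$ we take for $X$ the subdivision of $\Delta^m\times\Delta^1$ furnished by Lemma~\ref{lemma:product-of-simplex} with $X_0=X_1=\Delta^m$ (the product subdivision without new vertex of $\Delta^m\times\delta^0=\Delta^m\times\{a_1\}$ being $\Delta^m$ itself); since $\Delta^m\times\{a_1\}$ is a free facet of $\partial(\Delta^m\times\Delta^1)$, the ball $X$ collapses onto $X_{\vert(\partial\Delta^m\times\Delta^1)\cup(\Delta^m\times\{a_0\})}$, which is the required collapse, and the edge statements are immediate. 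When $m=0$ (reading $\Delta^0\times\Delta^n$ as $\Delta^n$) one builds $X$ by hand: scale $\delta^{n-1}$ half-way towards $a_0$, thereby cutting $\Delta^n$ into a small simplex on $a_0$ and this scaled copy, and subdivide the remaining collar between the scaled copy and $\delta^{n-1}$ by the product subdivision without new vertex of Proposition~\ref{proposition:cartesian-product}; the three bullet points are then checked directly, the collapse $X\searrow\{a_0\}$ consisting of collapsing the collar onto the scaled copy and then cone‑collapsing onto $a_0$. These are the base cases.

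For the inductive step, assume $m,n\ge1$ with $n\ge2$ and the statement known for $(m-1,n)$ and for all $(m',n-1)$. Write $\partial(\Delta^m\times\Delta^n)=(\partial\Delta^m\times\Delta^n)\cup(\Delta^m\times\partial\Delta^n)$, and recall that the facets of $\Delta^n$ are $\delta^{n-1}$ and the facets through $a_0$, which have the form $\tau_j=a_0*\sigma_j$ with $\sigma_j$ the facet of $\delta^{n-1}$ not containing $a_j$ ($1\le j\le n$). Subdivide this boundary sphere as follows: on $\Delta^m\times\delta^{n-1}$ use the product subdivision $L$ of Proposition~\ref{proposition:cartesian-product}; on each $\Delta^m\times\tau_j$ use the subdivision $X^{(j)}$ given by the inductive hypothesis for $(m,n-1)$ applied to $\tau_j$ with distinguished vertex $a_0$; and on each (facet of $\Delta^m)\times\Delta^n$ use the subdivision given by the inductive hypothesis for $(m-1,n)$. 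Granting that these pieces agree on their common faces (see below), they assemble into a simplicial subdivision $Z$ of $\partial(\Delta^m\times\Delta^n)$. Pick a point $x$ in the interior of the cell $\Delta^m\times\Delta^n$ and set $X:=Z*\{x\}$. By construction $X$ restricts to $L$ on $\Delta^m\times\delta^{n-1}$ and to $\Delta^m$ on $\Delta^m\times\{a_0\}$; and since for $n\ge2$ every edge $\Delta^1_i=(a_0,a_i)$ lies in $\tau_j$ for any $j\neq i$, the restriction of $X$ to $\Delta^m\times\Delta^1_i$ equals $X^{(j)}_{\vert\Delta^m\times\Delta^1_i}$, which by the inductive hypothesis is the complex of Lemma~\ref{lemma:product-of-simplex}, together with its collapsing and its ancestor's property. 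It remains to produce the collapse $X\searrow X_{\vert(\partial\Delta^m\times\Delta^n)\cup(\Delta^m\times\{a_0\})}$.

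For this I would realize the collapse $\Delta^n\searrow\{a_0\}$ fibrewise over the interior of $\Delta^m$. First, $\Delta^m\times\delta^{n-1}$ is a free facet of $\partial X$ (it meets the rest of the sphere exactly along $\partial(\Delta^m\times\delta^{n-1})$), so the standard fact that a ball collapses onto the closure of the complement of a free boundary facet (cf.\ \cite{RourkeSanderson}) yields $X\searrow X_{\vert(\partial\Delta^m\times\Delta^n)\cup(\Delta^m\times(a_0*\partial\delta^{n-1}))}$, reflecting $\Delta^n\searrow a_0*\partial\delta^{n-1}$. One then continues, collapsing the part lying over $\Delta^m\times\tau_j$ for $j=n,n-1,\dots,1$ by means of the collapses $X^{(j)}\searrow X^{(j)}_{\vert(\partial\Delta^m\times\tau_j)\cup(\Delta^m\times\{a_0\})}$ provided by the inductive hypothesis; since no cell of $\partial\Delta^m\times\Delta^n$ is ever removed, the process terminates at $X_{\vert(\partial\Delta^m\times\Delta^n)\cup(\Delta^m\times\{a_0\})}$, as wanted.

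The main obstacle is everything hidden in ``agree on their common faces'' and ``one then continues'': to glue the pieces of $Z$, and to string the collapses over the $\tau_j$ together consistently on the overlaps $\tau_j\cap\tau_k=a_0*(\sigma_j\cap\sigma_k)$ while never disturbing $\partial\Delta^m\times\Delta^n$, one is forced to carry along a strengthened inductive hypothesis — namely that the subdivision and its collapse restrict, on every face $c\times d$ of $\Delta^m\times\Delta^n$ with $a_0\in d$, to the corresponding lower‑dimensional instance (in the spirit of the Remark following Lemma~\ref{lemma:product-of-simplex}), and are compatible with the product subdivision of Proposition~\ref{proposition:cartesian-product} on the faces $c\times d$ with $a_0\notin d$. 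Once this strengthening is in place, the inductive step above closes; checking it is the laborious part of the argument.
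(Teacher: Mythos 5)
Your strategy is the paper's proof: induction on $m+n$, subdivide the boundary sphere by applying Proposition~\ref{proposition:cartesian-product} on the top facet $\Delta^m\times\delta^{n-1}$ and the inductive hypothesis on the remaining facets, cone over an interior point $x$, then collapse from the top facet downwards. Your decomposition of the lower boundary into the pieces $\Delta^m\times\tau_j$ is only notationally different from the paper's single term $\Delta^m\times(\{a_0\}*\partial\delta^{n-1})$, and your lexicographic induction on $(n,m)$ is a cosmetic variant of the paper's induction on $k=m+n$ (your $m=0$ base case by scaling is a nice explicit alternative to the paper's somewhat loose $(m,n)=(0,1),(1,0)$ base cases, which sit outside the lemma's stated hypotheses anyway). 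Two points of comparison are worth noting. For the first stage the paper does not invoke the general ``ball collapses onto the closed complement of a free boundary facet'' fact; it realizes the collapse $X_{\vert\{x\}*(\Delta^m\times\delta^{n-1})}\searrow X_{\vert\{x\}*\partial(\Delta^m\times\delta^{n-1})}$ explicitly by downward induction on dimension, pairing each cell $\sigma\in\Delta^m\times(\delta^{n-1}\setminus\partial\delta^{n-1})$ with $\{x\}*\sigma$; this explicit pairing is what makes the restriction and ancestor statements over $\Delta^m\times\Delta^1_i$ actually checkable rather than merely plausible, so you should prefer it to the black-box ball fact. And the ``strengthened inductive hypothesis'' you correctly anticipate being forced to carry is precisely what the paper does carry, implicitly: inside the inductive step it observes that the construction yields the stronger collapse $Y\searrow Y_{\vert\Delta^u\times\{a_0\}}$ for each lower-dimensional instance $Y=\Delta^u\times\Delta^v$, and it is these stronger collapses (coned over $x$) that it uses to string the lower-boundary collapses together consistently. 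Your diagnosis of the laborious part is accurate; the paper resolves it by the very strengthening you propose, carried through the induction rather than added to the lemma's statement.
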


\begin{proof}
The proof is by induction on $k=m+n>0$.
At rank $k=1$ there are two cases.
The case $m=0$ and $n=1$ is trivial: there is nothing to prove.
The case $m=1$ and $n=0$ is given by lemma \ref{lemma:product-of-simplex}.

Suppose the lemma is true until rank $k-1$.
Let $(m,n)\in \mathbb{N}^2$ be such that $m+n=k$.
We will first subdivide the boundary of $\Delta^m \times \Delta^n$.
Since
$$
\begin{array}{rcl}
  \partial (\Delta^m \times \Delta^n ) & = & (\partial \Delta^m \times\Delta^n ) \cup (\Delta^m \times \partial \Delta^n) \\
& = & (\partial \Delta^m \times \Delta^n ) \cup ( \Delta^m \times (\{a_0\} * \partial \delta^{n-1}))   \cup (    \Delta^m \times \delta^{n-1}  )  \\
\end{array}
$$
we define for each cellular complex above a simplicial subdivision.

\begin{itemize}
 \item[$\bullet$] The simplicial subdivision of $\Delta^m \times \delta^{n-1}$ is given by Proposition \ref{proposition:cartesian-product}: in particular, we do not create any new vertex.
 \item[$\bullet$] The induction hypothesis gives a simplicial subdivision of
$$
(\partial \Delta^m \times (\{a_0\} * \delta^{n-1}) ) \cup ( \Delta^m \times (\{a_0\} * \partial \delta^{n-1}) ).
$$
\end{itemize}

Let $x$ be a point which is in the interior of the $(m+n)$-cell of $\Delta^m \times \Delta^n$.
The simplicial subdivision $X$ of $\Delta^m \times \Delta^n$ is given by making the cone over $\{x\}$ of the simplicial subdivision of the boundary of $\Delta^m \times \Delta^n$.

By construction we have the following collapsing
\begin{equation}\label{eq:collapsing-upper-boundary}
  X_{\vert \{x\}* (\Delta^{m}\times \delta^{n-1}) } \searrow X_{\vert \{x\}* \partial (\Delta^{m}\times \delta^{n-1}) }
\end{equation}
which is realized by a downward induction on the dimension of cells of $\Delta^m \times (\delta^{n-1}-\partial \delta^{n-1})$: every cell $\sigma \in \Delta^m \times (\delta^{n-1}-\partial \delta^{n-1})$ is a free hyperface of $\{x\} *\sigma$.

The induction hypothesis says that there exits a simplicial subdivision $Y$ of $\Delta^u \times \Delta^v$ such that $Y\searrow Y_{\vert (\partial \Delta^u \times  \Delta^v )\cup( \Delta^u \times \{a_0\} ) }$ whenever $u+v<k$ ($a_0$ is the first vertex of $\Delta^v$).
In fact, we have also the following collapsing since the construction is made by induction:
$$
Y \searrow Y_{\vert \Delta^u \times \{a_0\}  }
$$
Therefore, we have the following collapsings
\begin{equation}\label{eq:collapsing-boundary}
  X_{\vert \partial \Delta^m \times (\{a_0\} * \delta^{n-1}) } \searrow X_{\vert \partial \Delta^m \times \{a_0\}  }
\end{equation}
\begin{equation}\label{eq:collapsing-lower-boundary}
  X_{\vert \Delta^m \times \{a_0\} * \partial \delta^{n-1} } \searrow X_{\vert \Delta^m \times \{a_0\} }
\end{equation}

Collapsing \eqref{eq:collapsing-upper-boundary} followed by the cone over $x$ of the collapsing \eqref{eq:collapsing-boundary} and the cone over x of the collapsing \eqref{eq:collapsing-lower-boundary} give the following collapsing:
$$
X\searrow X_{\vert (\partial \Delta^m \times  \Delta^n )\cup( \{x\} * (\Delta^m \times  \{a_0\})   ) }
$$
Finally there exists a collapsing $\{x\}*(\Delta^m \times \{a_0\} )  \searrow \Delta^m\times \{a_0\}$ (by choosing a vertex $y\in \Delta^m$ and considering the collapsing $\Delta^m \searrow \{y\}$) which gives the result.

In case $n=1$, this construction is the same as the one of lemma \ref{lemma:product-of-simplex}.
\end{proof}

\begin{theorem}\label{theo:cobordism-one-critical-point}
 Let $f$ be a generic Morse function on a cobordism $(M;M_0,M_1)$ with exactly one critical point $p$ of index $k$.
Then, there exists a $C^1$-triangulation of the cobordism $(T;T_0,T_1)$ such that
\begin{enumerate}
 \item the stable manifold of $p$ is a subcomplex of $T$ denoted $T^s_p$ and $T\searrow T^s_p \cup T_0$,
 \item there is a cell $\sigma_p$ of dimension $ind(p)$ such that $p\in \sigma_p \subset T^s_p$ and
$T^s_p - \sigma_p \searrow (T^s_p \cap T_0 )$
\end{enumerate}
In particular, the combinatorial Morse vector field given by these two collapsings has exactly one critical cell $\sigma_p$ outside cells of $T_0$.
\end{theorem}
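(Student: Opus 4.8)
The plan is to put the cobordism into the standard ``collar plus handle'' normal form and then combinatorially realize each piece — a product $M_0\times[0,1]$ and a single handle $D^k\times D^{\,n+1-k}$ — using respectively Lemma \ref{lemma:gluing-cobordisms} and Lemma \ref{lemma:product-simplex-nxm}, and finally glue the two triangulations and concatenate their collapses. Write $n+1=\dim M$. The extremal cases $k=0$ and $k=n+1$ are degenerate (the handle is a ball, $\overline{W^s(p)}$ is a point, resp.\ a ball triangulated by a single simplex $\sigma_p$, and the collapse (2) is empty), so I assume $1\le k\le n$. Using the Morse chart at $p$ together with the classical structure of elementary cobordisms (\cite{Milnor2}), I first pass, by a $C^1$-diffeomorphism, to the model
$$
M=(M_0\times[0,1])\ \cup_{\phi}\ H,\qquad H=D^k\times D^{\,n+1-k},
$$
where $\phi\colon S^{k-1}\times D^{\,n+1-k}\hookrightarrow M_0\times\{1\}$ embeds onto a tubular neighbourhood $N_0$ of the descending sphere, arranged so that the diffeomorphism carries $\overline{W^s(p)}$ to $\overline{\sigma_p}\cup(S\times[0,1])$ with $\overline{\sigma_p}=D^k\times\{c\}$ ($c$ the centre of $D^{\,n+1-k}$) the handle core, $p\in\mathrm{int}\,\overline{\sigma_p}$, and $S=\partial\overline{\sigma_p}\cong S^{k-1}$ the descending sphere, which lies in $M_0\times\{1\}$ and is continued downward by the collar $S\times[0,1]\subset M_0\times[0,1]$.

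For the handle I would be careful about one point: Lemma \ref{lemma:product-simplex-nxm} leaves a slice $\Delta^m\times\{a_0\}$ un-subdivided only over a \emph{vertex} $a_0$ of the second factor, whereas the handle core sits over the \emph{centre} of $D^{\,n+1-k}$. So I first star-subdivide the normal disc $D^{\,n+1-k}=\Delta^{\,n+1-k}$ at its barycentre $g$, writing it as the union of the standard $(n+1-k)$-simplices $\{g\}*F_j$ over the facets $F_j$ of $\Delta^{\,n+1-k}$, and then apply Lemma \ref{lemma:product-simplex-nxm} to each product $\Delta^k\times(\{g\}*F_j)$ with the role of $a_0$ played by $g$. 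The restriction clauses of that lemma (its ``$\Delta^1_i$'' statements, together with the compatibility of the ``no new vertex'' subdivisions of Proposition \ref{proposition:cartesian-product}) guarantee that the resulting subdivisions and their collapses agree on the overlaps $\Delta^k\times(\{g\}*(F_j\cap F_{j'}))$, hence patch to a simplicial subdivision $T_H$ of $H=\Delta^k\times D^{\,n+1-k}$ with: $T_H\vert_{\Delta^k\times\{g\}}=\Delta^k$ an un-subdivided $k$-cell $\overline{\sigma_p}$ (open top cell $\sigma_p\ni p$, and since $g\in\mathrm{int}\,D^{\,n+1-k}$ the cell $\sigma_p$ avoids $M_1$); no new vertices on the free part $\Delta^k\times\partial\Delta^{\,n+1-k}\subset M_1$; and a collapse $T_H\searrow T_H\vert_{N_0}\cup\overline{\sigma_p}$, where $N_0=\partial\Delta^k\times D^{\,n+1-k}$ is the attaching region and $\partial\sigma_p\subset N_0$.

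Next I fix $T_0$ (a $C^1$-triangulation of $M_0$, the bottom of the cobordism) and choose a $C^1$-triangulation $T_1'$ of $M_0$ whose restriction to $N_0$ is $T_H\vert_{N_0}$ and which contains $S$ (triangulated as $\partial\sigma_p=\partial\Delta^k$) as a subcomplex; this exists by extending a $C^1$-triangulation of the codimension-zero submanifold $N_0$ to $M_0$ (\cite{Whitehead}). Lemma \ref{lemma:gluing-cobordisms}, applied to the pair $(M_0,S)$ with prescribed ends $T_0,T_1'$, supplies a triangulation $T_{\mathrm{pr}}$ of $M_0\times[0,1]$ realizing these ends, together with collapses $T_{\mathrm{pr}}\searrow T_0\cup T_{\mathrm{pr}}\vert_{S\times[0,1]}$ and $T_{\mathrm{pr}}\vert_{S\times[0,1]}\searrow S\times\{0\}$. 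Set $T:=T_{\mathrm{pr}}\cup_{N_0\times\{1\}}T_H$, with $T_1$ the resulting triangulation of $M_1$, and $T^s_p:=\overline{\sigma_p}\cup(S\times[0,1])$, a subcomplex triangulating $\overline{W^s(p)}$. For (1): collapse $T_H\searrow T_H\vert_{N_0}\cup\overline{\sigma_p}$ first — legitimate inside $T$ because every hyperface it uses lies off $T_{\mathrm{pr}}$ and so stays free — then collapse $T_{\mathrm{pr}}\searrow T_0\cup T_{\mathrm{pr}}\vert_{S\times[0,1]}$, which remains legitimate after $\sigma_p$ has been attached since the faces $\partial\sigma_p\subset S\times[0,1]$ are untouched; the composite is $T\searrow T^s_p\cup T_0$. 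For (2), $T^s_p-\sigma_p$ equals $T_{\mathrm{pr}}\vert_{S\times[0,1]}$, which collapses onto $S\times\{0\}=T^s_p\cap T_0$ by the second collapse above. Finally, by the proposition that a collapsing defines a Morse matching, these two collapses give Morse matchings on disjoint sets of cells — the second inside $T^s_p$, the first in its complement — so their union is a Morse matching; and checking which cells remain unmatched (all of $N_0$ is matched, between the two collapses) shows the critical cells are exactly the cells of $T_0$ together with $\sigma_p\notin T_1$, which is the assertion.

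The delicate part will be the compatibility and ordering in the last two steps: making the three triangulations (of $M_0$ at the two levels, and of the handle $H$ including the barycentre star-subdivision of its normal disc) agree on the overlaps $N_0\times\{1\}$ and along the collar $S\times[0,1]$, and arranging the elementary collapsings so that each collapsed hyperface is still free when it is used. The freedom to prescribe both ends in Lemma \ref{lemma:gluing-cobordisms}, together with the ``no new vertex'' and ``restriction of the collapse / ancestor's property'' clauses built into Lemmas \ref{lemma:product-of-simplex} and \ref{lemma:product-simplex-nxm}, are precisely what make this bookkeeping go through; the normal form of the first step is classical, and the final count of critical cells is routine once the collapses are in place.
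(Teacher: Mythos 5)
Your proposal is correct and follows essentially the same overall strategy as the paper: triangulate a handle region with Lemma~\ref{lemma:product-simplex-nxm} after a star subdivision of the normal disc at its centre, triangulate the complementary product region with Lemma~\ref{lemma:gluing-cobordisms}, and concatenate the resulting collapses. You correctly anticipate the crucial subtlety of Lemma~\ref{lemma:product-simplex-nxm} (the unsubdivided slice sits over a \emph{vertex}, while the handle core sits over the \emph{centre} of the normal disc), and you resolve it by coning over the barycentre and applying the lemma facet by facet — which is precisely what the paper does as well.

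The one genuine difference is the choice of decomposition. The paper takes $C$ to be a tubular neighbourhood of the \emph{entire} stable manifold (so $\partial C_- = C\cap M_0\neq\emptyset$), triangulates $W^s(p)$ as a single standard simplex $\Delta^k$, and treats the complement $M\setminus\mathrm{Int}(C)$ as a product cobordism up to $M_1$. You instead use the dual normal form $M=(M_0\times[0,1])\cup_\phi H$, with the whole handle sitting above a full collar, so that $T^s_p$ is the handle core $\overline{\sigma}_p$ together with the cylinder $S\times[0,1]$. Your version makes the collapse in item~(2) of the theorem non-trivial (the paper's is vacuous, since there $T^s_p-\sigma_p=\partial T^s_p=T^s_p\cap T_0$), but it has a compensating advantage: the paper must acknowledge at the end of its proof that triangulating $W^s(p)$ by a single $\Delta^k$ is only a topological triangulation, and it repairs this by pushing $M_0$ slightly inward and re-triangulating the resulting product strip via Lemma~\ref{lemma:product-of-simplex}. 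Your ``collar plus handle'' picture has that strip built in from the start, so it effectively absorbs the paper's final correction into the initial set-up. Two small points you should make explicit: $T_0$ must be chosen so that $S\times\{0\}$ is a subcomplex (Lemma~\ref{lemma:gluing-cobordisms} needs a $C^1$-triangulation of the \emph{pair} $(M_0,S)$ at both ends), and the claim that the union of the two Morse matchings is still Morse deserves the one-line argument that any $V$-path entering the subcomplex $T^s_p$ can never leave it, so a closed path would have to lie entirely in one of the two acyclic pieces.
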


\begin{proof}
Suppose a Morse--Smale gradient $v$ for $f$ is fixed.
Let $W^s(p,v)$ be the corresponding stable manifold of $p$.
We follow the proof of Milnor which proves that $M_0 \cup W^s(p,v)$ is a deformation retract of $M$ (see the proof of Theorem 3.14 \cite{Milnor1}).
Let $C$ be a (small enough) tubular neighbourhood of $W^s(p,v)$.
The original proof consists of two steps.
First, $M_0 \cup C$ is a deformation retract of $M$: this is done by pushing along the gradient lines of $v$.
Then, $M_0 \cup W^s(p,v)$ is a deformation retract of $M_0 \cup C$.
We prove the theorem in two steps.

\paragraph*{\textbf{First step: construction of a good triangulation of $C$.}}\hfill\\
The tubular neighbourhood $C$ is diffeomorphic to $D^k \times D^{n-k}$ (for $i\in \mathbb{N}^*$, $D^i$ is the unit disk in $\mathbb{R}^i$).
Thanks to this diffeomorphism, the stable manifold is identified with $D^k \times \{0\}$ and the adherence of the unstable manifold is identified with $\{0\}\times D^{n-k}$.
Triangulate the stable manifold by the standard simplex $\Delta^k$ and denote $\sigma_p$ its interior (so $T^s_p = \overline{\sigma}_p$).

We triangulate $D^{n-k}$ by choosing an arbitrary triangulation of $\partial D^{n-k} = S^{n-k-1}$ and considering $D^{n-k}$ as the cone over its center $\{0\}$: this gives a triangulation of $D^{n-k}$.
The triangulation of $\overline{\sigma}_p\times D^{n-k}$ is the following one: choose a simplicial subdivision of $\overline{\sigma}_p\times \partial D^{n-k}$ without any new vertex given by proposition \ref{proposition:cartesian-product}.
Then, triangulate the cartesian product $\overline{\sigma}_p\times D^{n-k}$ with the triangulation of $\overline{\sigma}_p\times \partial D^{n-k}$ already fixed thanks to lemma \ref{lemma:product-simplex-nxm}:
\begin{itemize}
 \item[$\bullet$] for each simplex $\nu\in \partial D^{n-k}$, the lemma constructs a triangulation of $\overline{\sigma}_p  \times (\{0\}*\nu)$,
 \item[$\bullet$] for each pair of simplexes $(\nu_0,\nu_1) \in (\partial D^{n-k})^2$, the simplicial subidivisions of $\overline{\sigma}_p \times (\{0\}*\nu_i)$ coincides over $\overline{\sigma}_p \times (\{0\}*(\nu_0 \cap \nu_1))$.
\end{itemize}

Let $T^C$ be the triangulation of $\overline{\sigma}_p \times D^{n-k}$ constructed above.
By construction, we have the following collapsing

\begin{equation}\label{eq:collapsing-over-critical-cell}
  T^C \searrow T^s_p \cup T^C _{ \vert \partial \overline{\sigma}_p \times D^{n-k} }
\end{equation}

\paragraph*{\textbf{Second step: combinatorial realization of the first retraction.}}\hfill\\
Let $T_0$ be a triangulation of $M_0$ which coincides over $M_0 \cap C$ with the triangulation above. 
Consider the following submanifolds with boundary: $\partial C_- = M_0 \cap C$, $M_0^{\overline C} = M_0 - Int(\partial C_-)$ and $\partial C_+ = \partial C - Int(\partial C_-)$.
Let $V$ be $\partial C_- \cap \partial C_+$ : it is diffeomorphic to $\partial D^k \times \partial D^{n-k} = S^{k-1} \times S^{n-k-1}$ a manifold of dimension $n-2$.

The manifold $(\partial C_+ ,V)$ is a manifold with boundary which is triangulated.
The gradient lines of $v$ starting at any point of this manifold are transverse to it: we push along the gradient lines of $v$ the triangulation until it meets $M_1$.
It gives a triangulation of $(M_1^{\partial C_+ },M_1^V)$ which is a submanifold of $M_1$ with boundary.
This triangulation is $C^1$ since pushing along the flow in this case is a diffeomorphism.
Then, we get a product cobordism (with boundary) with triangulation of the top and the bottom already fixed: lemma \ref{lemma:gluing-cobordisms} gives a triangulation of this cobordism with the desired collapsing.

The same construction holds for  $(M_0^{\overline C},V)$ (we suppose that the triangulation of $V\times [0,1]$ is the same as the one given above).
Let $T$ be the corresponding triangulation of $M$.
Then, we have the following collapsing
\begin{equation}\label{eq:collapsing-over-tubular-neighbourhood}
  T \searrow T_{0} \cup T^C
\end{equation}

\paragraph*{\textbf{Conclusion.}} The composition of collapsings \eqref{eq:collapsing-over-tubular-neighbourhood} and \eqref{eq:collapsing-over-critical-cell} give
$$
T \searrow T_0 \cup T^s_p
$$
Since $T^s_p = \overline{\sigma}_p$ we get the following collapsing: $T^s_p - \sigma_p \searrow \partial T^s_p$. Thus a combinatorial Morse vector field which satifies the conclusion of the theorem has been constructed.
Nevertheless, note that the triangulation above in not $C^1$: the triangulation of the stable manifold done by $\Delta^k$ gives only a topological triangulation.
To correct this, push the level $M_0$ (denote this level $M_0'$) along the gradient line a little inside the cobordism so that the stable manifold can be $C^1$-triangulated by the standard simplex.
Then, we endow the cobordism whose boundary is $M_0 \cup M_0'$ with a $C^1$-triangulation given by Lemma \ref{lemma:product-of-simplex}.
\end{proof}

\begin{corollary}\label{cor:bijection-between-critical-cells-and-points}
 Let $f$ be a generic Morse function on a Riemannian closed manifold $M$.
Then, there exists $T$ a $C^1$-triangulation of $M$ and a combinatorial Morse vector field $V$ defined on $T$ such that for every $k\in \mathbb{N}$ the set of critical poins of index $k$ is in bijection with the set of critical cells of dimension $k$.
\end{corollary}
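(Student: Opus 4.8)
The plan is to cut $M$ into elementary cobordisms, realize each one combinatorially by Theorem~\ref{theo:cobordism-one-critical-point}, and glue the pieces together using Lemma~\ref{lemma:gluing-cobordisms}. First I would replace $f$, by a rearrangement of its critical values (see \cite{Milnor2}), by a generic Morse function with the same critical points $p_1,\dots,p_c$, the same Morse--Smale gradient, but pairwise distinct critical values $f(p_1)<\dots<f(p_c)$. Picking regular values $a_0<f(p_1)<a_1<\dots<f(p_c)<a_c$, one gets $M=W_1\cup\dots\cup W_c$ with $W_j=f^{-1}([a_{j-1},a_j])$ a cobordism containing the single critical point $p_j$; here $W_1$ is a cobordism from $\emptyset$ (its bottom $f^{-1}(a_0)$ is empty, $p_1$ being the minimum of index $0$) and $W_c$ is a cobordism to $\emptyset$ ($p_c$ being the maximum of index $n$). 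Theorem~\ref{theo:cobordism-one-critical-point} applied to $W_j$ --- read with $M_0=\emptyset$ when $j=1$ and $M_1=\emptyset$ when $j=c$ --- gives a $C^1$-triangulation $T^j$ of $W_j$ and a combinatorial Morse vector field $V^j$ on the cobordism with exactly one critical cell $\sigma_{p_j}$ outside the bottom, of dimension $ind(p_j)$; every cell of the bottom is critical for $V^j$ and no cell of the top is.

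Next I would glue the pieces by induction on $i$, producing a $C^1$-triangulation $T_{(i)}$ of $M_{(i)}:=W_1\cup\dots\cup W_i$ and a combinatorial Morse vector field $V_{(i)}$ whose critical cells are exactly $\sigma_{p_1},\dots,\sigma_{p_i}$ and such that no cell of the top level set $N_i:=f^{-1}(a_i)$ is critical (the case $i=1$ being $T^1,V^1$). The subtlety is that the triangulation that $T_{(i)}$ induces on $N_i$ need not coincide with the one $T^{i+1}$ uses on the bottom of $W_{i+1}$. To reconcile them I would insert a collar: Lemma~\ref{lemma:gluing-cobordisms}, applied with ambient manifold $N_i$ and empty submanifold, yields a $C^1$-triangulation $R$ of $N_i\times[0,1]$ restricting to the two prescribed triangulations on the two ends and a collapsing of $R$ onto the end coming from $T_{(i)}$; the induced vector field $V^R$ thus leaves every cell of that end critical and matches every other cell of $R$. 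Set $T_{(i+1)}:=T_{(i)}\cup R\cup T^{i+1}$, a $C^1$-triangulation of a manifold diffeomorphic to $M_{(i+1)}$, and let $V_{(i+1)}$ be the union of $V_{(i)}$, $V^R$, $V^{i+1}$. A clash of matchings can occur only over the two glued level sets, and it is harmless: over each of them one of the two fields leaves those cells critical while the other matches them, so one keeps the matched side. Counting then shows that the critical cells of $V_{(i+1)}$ are exactly $\sigma_{p_1},\dots,\sigma_{p_{i+1}}$ and that $N_{i+1}$ carries no critical cell. For $i=c$ this gives a $C^1$-triangulation $T$ of $M$ and a combinatorial vector field $V$ whose critical cells are $\sigma_{p_1},\dots,\sigma_{p_c}$ with $dim(\sigma_{p_j})=ind(p_j)$, so $p_j\mapsto\sigma_{p_j}$ is a bijection preserving the grading.

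It remains to check that $V$ is a combinatorial \emph{Morse} vector field, i.e.\ has no non-stationary closed $V$-path, and this --- together with the reconciliation of triangulations on the common level sets --- is where the real work lies, the gluing itself being otherwise routine. The key remark is forward-invariance: along the induction, $V_{(i+1)}$ restricted to the subcomplex $M_{(i)}$ coincides with $V_{(i)}$, which maps cells of $M_{(i)}$ into $M_{(i)}$, and faces of cells of $M_{(i)}$ again lie in $M_{(i)}$; hence any $V_{(i+1)}$-path entering $M_{(i)}$ stays there, and likewise for $M_{(i)}\cup R$. A non-stationary closed $V$-path would thus be confined to a single piece $W_j$ or a single collar $R$; since each $V^j$ and each $V^R$ comes from a collapsing, it has no non-stationary closed path, a contradiction. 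The degenerate cases of Theorem~\ref{theo:cobordism-one-critical-point} needed for the minimum and the maximum only require reading that statement with $M_0=\emptyset$, respectively $M_1=\emptyset$: the stable complex of the minimum is a single vertex, and that of the maximum is the whole top cobordism.
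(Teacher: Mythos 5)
Your proof is correct and follows essentially the same approach as the paper's: cut $M$ at regular values into elementary cobordisms and interpolating product cobordisms, realize each piece via Theorem~\ref{theo:cobordism-one-critical-point} and Lemma~\ref{lemma:gluing-cobordisms}, glue, and rule out non-stationary closed $V$-paths by observing that the matching always pushes downward. You supply more detail on the forward-invariance argument and on reconciling the boundary triangulations than the paper does (it merely asserts that ``along $V$-paths we only can go down''), but the decomposition and the two key lemmas invoked are the same.
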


\begin{proof}
 Since the Morse function $f$ is generic, we have that for any critical points $p\neq q$, $f(p)\neq f(q)$.
Let $a_1<a_2<\ldots < a_l$ be the ordered set of critical values of $f$.
For each $k\in \{1,\ldots,l\}$, let $\varepsilon_k>0$ be small enough so that the cobordism
$$
( M^{a_k};M^{a_k}_-,M^{a_k}_+ ) =\big( f^{-1}([a_k-\varepsilon_k,a_k+\varepsilon_k]);f^{-1}(a_k-\varepsilon_k),f^{-1}(a_k+\varepsilon_k)
\big)
$$
is a cobordism with exactly one critical point.
Define for $k\in \{1,\ldots,l-1\}$ the product cobordisms
$$
(M^{b_k};M^{a_{k-1}}_+ , M^{a_k}_- ) = \big( f^{-1}([a_{k-1}+\varepsilon_{k-1},a_k-\varepsilon_k]);f^{-1}(a_{k-1}+\varepsilon_{k-1}),f^{-1}(a_k-\varepsilon_k) \big)
$$
The manifold $M$ is equal to:
$$M^{a_1}  \cup M^{b_1} \cup \ldots \cup M^{b_{l-1}} \cup M^{a_l}$$
Theorem \ref{theo:cobordism-one-critical-point} gives for $k=1,\ldots,l$ a combinatorial realization of the cobordism $( M^{a_k};M^{a_k}_-,M^{a_k}_+ )$.
Lemma \ref{lemma:gluing-cobordisms} gives a combinatorial realization of each cobordism $(M^{b_k};M^{a_{k-1}}_+ , M^{a_k}_- )$ for $k=1,\ldots,l-1$ (with the convention that $M^{a_0} = \emptyset$).
Then, we construct a $C^1$-triangulation of $M$ and define on it a combinatorial vector field.
It is in fact a combinatorial Morse vector field since along $V$-paths we only can go down and the conclusion of the corollary follows.
\end{proof}

\subsection{Proof of Theorem \ref{theo:combinatorial-realization}}
Since $f$ is generic, we use the Rearrangement Theorem \cite[Theorem 4.8]{Milnor2} to consider $g$ a generic self-indexed Morse function such that
\begin{itemize}
 \item[$\bullet$] the set of critical points of index $k$ of $g$ coincides with the one of $f$ for every $k\in \mathbb{N}$,
 \item[$\bullet$] for each pair of critical points $p$ and $q$ of successive index, the set of integral curves up to renormalization connecting $q$ to $p$ for $g$ is in bijection with the corresponding set for $f$ (we suppose here that Morse--Smale gradients have been chosen for $f$ and for $g$),
 \item[$\bullet$] this bijection induces an isomorphism between the Thom-Smale complexes of $f$ and $g$ (we suppose that orientations of stable manifolds have been chosen).
\end{itemize}

Thus, we suppose that $f:M^n\rightarrow \mathbb{R}$ is a generic self-indexed Morse function i.e. for every $k\in \mathbb{N}$, for every $p\in Crit_k(f)$, $f(p)=k$.
In particular $f(M)=[0,n]$.
We suppose whenever we need it that a Morse--Smale gradient $v$ for $f$ is given.

One more time, we will cut $M$ in cobordisms (almost) elementary and control combinatorially the behavior of $V$-paths.
For $i\in \{0,\ldots,n\}$ choose $0<\varepsilon_i<1/2$.
For $i\in \{0,\ldots,n\}$, let $(M^i;M^{i}_{-,},M^{i}_{+})$ be the cobordism
\begin{equation*}
(f^{-1}([i-\varepsilon_i,i+\varepsilon_i]); f^{-1}(i- \varepsilon_i), f^{-1}(i+ \varepsilon_i) )
\end{equation*}
Similarly, define $(M^{i,i+1};M^{i}_{+},M^{i+1}_{-})$ the product cobordism equal to
\begin{equation*}
(f^{-1}([i+\varepsilon_i,i+1-\varepsilon_{i+1}]) ; M^{i}_{+},M^{i+1}_{-})
\end{equation*}
Then
\begin{equation*}
 M =  M^0 \cup M^{0,1} \cup M^1 \cup \ldots \cup M^{n-1,n} \cup M^n
\end{equation*}
For all $i\in\{0,\ldots,n\}$, $(M^{i};M^{i}_{-},M^{i}_{+})$ is a cobordism with $\vert Crit_i(f) \vert$ critical points of index $i$ (maybe there is no critical point).
The triangulation of $M$ is constructed in the following way:
\begin{enumerate}
 \item triangulation of cobordisms $(M^{i};M^{i}_{-},M^{i}_{+})$ for all $i\in \{0,\ldots,n\}$ given by Theorem \ref{theo:cobordism-one-critical-point},
 \item triangulation of cobordisms $(M^{i,i+1};M^{i}_{+},M^{i+1}_{-})$ for all $i\in \{0,\ldots,n\}$ given by lemma \ref{lemma:gluing-cobordisms}.
\end{enumerate}

\begin{remark}
 Theorem \ref{theo:cobordism-one-critical-point} is proved in the case where there is exactly one critical point.
This proof extends directly to the case of $k$ critical points of the same index under the condition that tubular neighbourhoods of stable manifolds are chosen to be disjoints one from each other.
\end{remark}

Let $p$ be a critical point of index $k$ and $C(p)$ be a tubular neighbourhood (small enough) of the stable manifold of $p$ in the corresponding cobordism.
Denote $\partial C_- (p)$ (resp. $\partial C_+ (p)$) the submanifold diffeomorphic to $\partial D^k \times D^{n-k}$ (resp. $D^k \times \partial D^{n-k}$).
Denote $\sigma_p$ the critical cell of dimension $k$ corresponding to $p$ (see Theorem \ref{theo:cobordism-one-critical-point}).

\textbf{Hypothesis on the triangulation of $\partial C_+ (p)$.}
\begin{enumerate}
 \item stable manifolds of critical points of index $k+1$ intersect $\partial C_+ (p)$ along a subcomplex of dimension $k$ and intersect $\sigma_p \times \partial D^{n-k}$ along cells of dimension $k$ of the type $\sigma\times \{a_i\}$ where $a_i$ is a vertex of $\partial D^{n-k}$,
 \item each integral curve up to renormalization $\gamma$ from $p$ to $q\in Crit_{k+1}(f)$ intersects $\partial C_+ (p)$ in the interior of a $k$-cell $\sigma_{\gamma}\in\sigma_p \times \partial D^{n-k}$,
 \item given two distinct integral curves up to renormalization $\gamma$ and $\gamma'$ from $p$ to critical points of index $k+1$ then $\sigma_{\gamma} \neq \sigma_{\gamma'}$.
\end{enumerate}

\begin{remark}
The first hypothesis is satisfied by choosing small enough $\varepsilon_k$ and since stable and unstable manifolds are transverse.
For such a small enough $\varepsilon_k$ the last hypothesis will be satisfied.
The second hypothesis is automatically satisfied if the first hypothesis is satisfied.
\end{remark}

In each triangulated cobordism $(M^{k};M^{k}_{-},M^{k}_{+})$, stable manifolds of critical points of index $k$ are subcomplexes.
Following notations of Theorem \ref{theo:cobordism-one-critical-point}, we have the following collapsing
$$T^s_p - \sigma_p \searrow \partial T^s_p$$
Using lemma \ref{lemma:gluing-cobordisms}, we obtain the following collapsing
$$
M^{k-1,k} \searrow M^{k-1}_+
$$
which can be restricted to the stable manifold of $p$ since it is a submanifold of $M^{k}_{-}$.
With respect to the stable manifold, the combinatorial Morse vector field satisfies the ancestor's property.

Let $\gamma$ be an integral curve of $v$ up to renormalization from $q\in Crit_{k-1}(f)$ to $p$.
It intersects $\partial C_+ (q)$ in a point which by hypothesis belongs to a cell $\sigma_q \times \{a_{\gamma} \}$.
There is a $1-1$ correspondance between the set of integral curves up to renormalization from $q$ to $p$ (with $ind(p)=ind(q)+1$) and $V$-paths from hyperfaces of $\sigma_p$ to $\sigma_q$ given by $\gamma \longleftrightarrow \sigma_{\gamma}$.

From $\sigma_{\gamma}$, there is a unique $V$-path ending at $\sigma_q$.
Since $V$ satisfies the ancestor's property in the stable manifold and $\sigma_{\gamma}$ is a cell of dimension $k-1$ there is an ancestor of $\sigma_{\gamma}$ which is an hyperface of $\sigma_p$.
This gives a $V$-path between an hyperface of $\sigma_p$ to $\sigma_q$ which corresponds to $\gamma$.

We endow each critical cell with the orientation of the corresponding stable manifold and every other cell is endowed with an arbitrary orientation.

By construction, the multiplicity of $V$-path coincides with the sign of the corresponding gradient path and the theorem follows.

\section{Complete matchings and Euler structures}\label{section:complete-matchings-and-euler-structures}
In this section, we use Theorem \ref{theo:combinatorial-realization} to prove the following: given a closed oriented 3-manifold and an Euler structure on it, there is a triangulation such that a complete matching on the Hasse diagram of a triangulation realizes this Euler structure.

\subsection{Complete matchings}
\begin{definition}
A complete matching on a graph is a matching such that every vertex belongs to an edge of the matching.
\end{definition}

As a corollary of theorem \ref{theo:combinatorial-realization} we obtain:

\begin{corollary}\label{cor:existence-matching}
Let $M$ be a closed smooth manifold of dimension 3.
Then there exits a $C^1$-triangulation of $M$ such that a complete matching on its Hasse diagram exists.
\end{corollary}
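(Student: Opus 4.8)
The plan is to derive this corollary directly from the combinatorial realization theorem (Theorem~\ref{theo:combinatorial-realization}) by producing a Morse function whose associated discrete Morse vector field turns out to have \emph{no} critical cells other than those forced by homology, and then upgrading that to a genuinely complete matching. More precisely, I would start from any smooth Morse function $f$ on the closed oriented $3$-manifold $M$; after a small perturbation we may assume $f$ is generic (in the sense of the excerpt, i.e.\ equipped with a Morse--Smale gradient), and we fix orientations of all stable manifolds so that the smooth Thom--Smale complex is defined. Theorem~\ref{theo:combinatorial-realization} then yields a $C^1$-triangulation $T$ of $M$ together with a combinatorial Morse vector field $V$ whose critical cells are in bijection with the critical points of $f$. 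The matching underlying $V$ leaves exactly the critical cells unmatched, so the number of unmatched vertices of the Hasse diagram equals the number of critical points of $f$.

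The key point is to arrange that this set of critical cells is \emph{empty} — equivalently that the matching $V$ is already complete. Since $M$ is a closed manifold of dimension exactly $3$, I would use a Morse function with a minimal set of critical points: one minimum, one maximum, and (by Poincar\'e duality and standard handle-cancellation arguments in low dimensions) a number of index-$1$ and index-$2$ critical points determined by the first Betti number, say $b$ each. So in general the realized matching still leaves $2 + 2b$ cells unmatched — it is \emph{not} automatically complete. The remedy is to modify the triangulation locally: near each unmatched critical cell $\sigma_p$ one can perform a finite sequence of barycentric-type subdivisions or anti-collapses (inverse elementary collapses) introducing a small acyclic piece, and then match $\sigma_p$ across one of the new cells while matching the remaining new cells among themselves. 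Each such local move replaces one unmatched cell by zero unmatched cells at the cost of adding finitely many matched pairs, and — because the move is supported in a ball — it does not interfere with the rest of $V$ nor create closed $V$-paths (the new edges all point into the freshly added cells, so any $V$-path entering the modified region terminates there). Iterating over the finitely many critical cells produces a $C^1$-triangulation $T'$ of $M$ whose Hasse diagram carries a complete matching.

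The main obstacle I anticipate is precisely the last step: making the local un-matching-removal move rigorously while (i) keeping the triangulation a genuine $C^1$-triangulation of $M$, (ii) preserving the Morse (acyclicity) condition on the combined matching, and (iii) ensuring the local modifications near different critical cells can be carried out in disjoint balls. Point (iii) is easy if one chooses the tubular neighbourhoods of the stable manifolds disjoint, as is already done in the proof of Theorem~\ref{theo:combinatorial-realization}; point (ii) reduces to the observation used throughout the excerpt that collapses (and their inverses) never create non-stationary closed paths; point (i) is where the $C^1$-hypothesis and the technology of Lemma~\ref{lemma:product-of-simplex} and Lemma~\ref{lemma:gluing-cobordisms} are invoked, exactly as in the existing proofs, to guarantee that the subdivisions can be taken $C^1$. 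An alternative, possibly cleaner, route to the same end: choose $f$ so that every index-$1$ critical point is joined to an index-$0$ or index-$2$ critical point by a single gradient trajectory, arrange the triangulation so that the corresponding $V$-path has length zero, and then enlarge $V$ by the edge $(\sigma_p,\tau)$ directly; I would pursue whichever of these makes the bookkeeping of signs and acyclicity shortest.
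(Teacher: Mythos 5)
There is a genuine gap in the local-modification step. The claim that ``each such local move replaces one unmatched cell by zero unmatched cells at the cost of adding finitely many matched pairs'' is false by a parity/Morse-inequality argument: an elementary anticollapse adds exactly two cells of consecutive dimension; if you subsequently match the critical cell $\sigma_p$ with one of the new cells, the other new cell becomes critical, so you trade one critical cell for another and never reach zero. More fundamentally, you are also imposing a constraint that cannot hold: you insist on ``preserving the Morse (acyclicity) condition on the combined matching,'' but a \emph{complete} matching on a triangulation of a closed connected $3$-manifold can never be a Morse matching, because the discrete Morse inequalities force at least $b_0(M)=1$ critical $0$-cell and $b_3(M)=1$ critical $3$-cell. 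The desired complete matching necessarily contains non-stationary closed $V$-paths, and no sequence of subdivisions or anticollapses supported in disjoint balls near the individual critical cells will get you there while keeping acyclicity.

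The paper's proof instead kills critical cells globally and \emph{in pairs of consecutive dimension}. It chooses a pointed Heegaard splitting for which the $\alpha$- and $\beta$-curves admit an $n$-uplet of intersection points $x$ defining a bijection between the index-$1$ and index-$2$ critical points of the corresponding Morse function, and uses the base point $z$ to get a gradient trajectory from the index-$0$ to the index-$3$ critical point. Applying Theorem~\ref{theo:combinatorial-realization}, each such trajectory becomes a $V$-path joining the two critical cells, and one \emph{shifts} the matching along that $V$-path, which matches both endpoints at once (and creates a closed $V$-path, which is allowed). There remains a small collision at the $2$-cell $\tau_z$, which the paper resolves by a local adjustment of the matching in the cone neighbourhood of the index-$0$ vertex. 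Your final ``alternative route'' gestures in this direction, but enlarging $V$ ``by the edge $(\sigma_p,\tau)$ directly'' only works when the $V$-path between the two critical cells has length zero, which you cannot in general arrange for all pairs simultaneously; and it does not address the index-$0$/index-$3$ pair at all. The essential missing ideas relative to the paper are: pairing critical cells via flowlines coming from a suitable Heegaard splitting, shifting the matching along the corresponding $V$-paths, and accepting that the resulting complete matching is no longer a Morse matching.
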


\begin{proof}
Since $M$ is a closed smooth manifold of dimension 3 we have $\chi(M)=0$ where $\chi$ denotes the Euler characteristic.
Take a pointed Heegaard splitting of $M$ $(\Sigma_g ; \underline{\alpha} = (\alpha_1,\ldots,\alpha_g), \underline{\beta} = (\beta_1,\ldots,\beta_g);z)$ of genus $g$ so that there is an $n$-uplets of intersection points $x$ between the $\alpha$'s and the $\beta$'s which defines a bijection between the sets $\underline{\alpha}$ and $\underline{\beta}$.
It is always possible to find such a pointed Heegaard splitting after a finite number of isotopies of the $\alpha$'s and $\beta$'s curves (see \cite{Gompf1}).
The Morse function $f$ corresponding to this Heegaard splitting has one critical point of index 0 and 3 and $g$ critical points of index 1 and 2.
Denote the set of index 1 (resp. 2) critical points by $\{q_i\}_{i=1}^{g}$ (resp. $\{p_i\}_{i=1}^{g}$) where $q_i$ (resp. $p_i$) corresponds to $\alpha_i$ (resp. $\beta_i$) for all $i\in \{1,\ldots,g\}$.
The $n$-uplet of intersection points $x=(x_{1,i_1},\ldots,x_{n,i_n})$ gives for each $j\in \{1,\ldots,g\}$ an integral curve connecting $q_j$ to $p_{i_j}$.
The point $z$ gives an integral curve connecting the index 0 critical point to the index 3 critical point.

Take a combinatorial realization $(T,V)$ as given by Theorem \ref{theo:combinatorial-realization} of $(M,f)$.
Then to each point $x_{i,j_i}$ correspond now a $V$-path $\gamma$ from an hyperface of the critical cell $\sigma_{p_{i_j}}$ to $\sigma_{q_i}$: we change the matching along this path so that both $\tau_{p_{i_j}}$ and $\sigma_{q_i}$ are no more critical cells.
If $\gamma: \sigma_0,\ldots,\sigma_r = \sigma_{q_i}$, then do the following:
\begin{itemize}
 \item[$\bullet$] match $\sigma_0$ with $\tau$,
 \item[$\bullet$] for every $i\in \{1,\ldots,r\}$ match $\sigma_i$ with $V(\sigma_{i-1})$.
\end{itemize}
Now suuppose that $z$ belongs to the interior of a 2-cell $\tau_z$ (if not, subdivide $T$).
Denote by $\varsigma$ the critical cell of dimension 3 and by $\upsilon$ the critical cell of dimension 0.
There is by construction a $V$-path $\gamma : \tau_0 ,\ldots,\tau_r = \tau_z$ from an hyperface of $\varsigma$ to $\tau_z$ since $z$ is in the stable manifold of the index 3 critical point.
We modify the matching along $\gamma$ this way:
\begin{itemize}
 \item[$\bullet$] match $\tau_0$ with $\varsigma$,
 \item[$\bullet$] for every $i\in \{1,\ldots,r\}$ match $\tau_i$ with $V(\tau_{i-1})$.
\end{itemize}
In fact, it is no more a matching since $\tau_z$ belongs to two edges of the matching.

Nevertheless, $\tau_z$ belongs to the unstable manifold of $\upsilon$ the critical cell of dimension $0$.
By the construction done in Theorem \ref{theo:combinatorial-realization}, the tubular neighbourhood of the critical point of index $0$ is equal to $D^3 = \partial D^3 * \{0\}$.
The triangulation of this tubular neighbourhood is given by making the cone over $\{0\}= \upsilon$ of a triangulation of $\partial D^3$.
We modify the matching as follows.
Let $\nu$ denotes the critical $0$-cell and suppose $\overline{\tau}_z *\nu$ is the tetrahedron $ABCD$ where $A$ corresponds to $\nu$.
The collapsing $\partial D^3 * \{0\} \searrow \{0\}$ gives in particular the following matching on $ABCD$: $(BCD,ABCD)$, $(BC,ABC)$ and $(B,AB)$ ($A$ is critical).
Modify the matching by $(A,AB)$, $(B,BC)$ and $(ABC,ABCD)$.
Then, $BCD$ (which is $\tau_z$) is critical.
This gives a complete matching over $T$.
\end{proof}

\subsection{Euler structures and homologous vector fields}
Throughout this subsection we use conventions of Turaev \cite{Turaev3}.

\subsubsection{Combinatorial Euler structures}
Complete matchings have an interpretation as Euler chains.
First, we recall Euler structures as defined by Turaev \cite{Turaev3}.
Let $(M,\partial M)$ be a smooth manifold of dimension $n$ and $T$ be a $C^1$-triangulation of $M$.

Suppose $\partial M = \partial_0 M \coprod \partial_1 M$ be such that $\chi(M,\partial_0 M)=0$ and let $T_i$ be equal to $T_{\vert \partial_i M}$ for $i\in \{0,1\}$.

Denote $K$ the set of cells of $T$ and $K_i$ the set of cells of $T_i$ for $i\in \{0,1\}$.
For each cell $\sigma\in K$, let $sgn(\sigma)$ be equal to $(-1)^{dim(\sigma)}$ and pick $a_{\sigma}$ a point in the interior of $\sigma$.
An Euler chain in $(T,T_0)$ is a one-dimensional singular chain $\xi$ in $T$ with the boundary of the form $\sum_{\sigma\in K-K_0} sgn(\sigma)a_{\sigma}$.
Since $\chi(M,\partial_0 M)=0$, the set of Euler chains is non-empty.
Given two Euler chains $\xi$ and $\eta$, the difference $\xi-\eta$ is a cycle.
If   $\xi-\eta = 0 \in H_1(M)$ then we say that $\xi$ and $\eta$ are homologous.
A class of homologous Euler chains in $(T,T_0)$ is called a combinatorial Euler structure on $(T,T_0)$.
Let $Eul(T,T_0)$ be the set of Euler structures on $(T,T_0)$.
If $\xi$ is an Euler chain, denote by $[\xi]$ its class as a combinatorial Euler structure.
Euler chains behave well with respect to the subdivision of a triangulation: this allows us to consider the set $Eul(M,\partial_0 M)$ of Euler structures on $(M,\partial_0 M)$.
Taking $\xi$ an element of this set means choosing a triangulation $(T,T_0)$ of $(M,\partial_0 M)$ and considering an Euler chain on $(T,T_0)$.

\begin{remark}
 Let $\mathcal C$ be a complete matching on a $C^1$-triangulation $(T,T_0)$ of $(M,\partial_0 M)$.
Then it defines an Euler chain $[\xi_c]\in Eul(T,T_0)$: orient every edge of the matching from odd dimensional cells to even dimensional cells. Complete matchings are special Euler chains that do not pass through a cell more than one time.
\end{remark}

\subsubsection{Homologous vector fields}
By a vector field on $(M,\partial_0 M)$ we mean (except in clearly mentioned case) a non-singular continuous vector field of tangent vectors on $M$ directed into $M$ on $\partial_0 M$ and directed outwards on $\partial_1 M$.
Since $\chi(M,\partial_0 M)=0$, there exists such vector fields on $(M,\partial_0 M)$.

Vector fields $u$ and $v$ on $(M,\partial_0 M)$ are called homologous if for some closed ball $B\subset Int(M)$ the restriction of the fields $u$ and $v$ are homotopic in the class of non-singular vector fields on $M-Int(B)$ directed into $M$ on $\partial_0 M$, outwards on $\partial_1 M$, and arbitrarily on $\partial B$.
Denote by $vect(M,\partial_0 M)$ the set of homologous vector fields on $(M,\partial_0 M)$ and the class of a vector field $u$ is denoted by $[u]$.

\subsubsection{The canonical bijection}
Turaev proved the following:
\begin{theorem}[Turaev \cite{Turaev3}]
 Let $(M,\partial_0 M)$ be a smooth pair such that $dim(M)\geq 2$.
For each $C^1$-triangulation $(T,T_0)$ of the pair $(M,\partial_0 M)$ there exists a bijection
$$
\rho: Eul(T,T_0) \rightarrow vect(M,\partial_0 M)
$$
\end{theorem}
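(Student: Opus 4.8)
The plan is to construct Turaev's bijection explicitly from a canonical vector field together with a zero-cancellation guided by the Euler chain, and then to recognise $\rho$ as an equivariant map of torsors, whence bijectivity is automatic. First I would fix a model field: smoothing the gradient of a standard piecewise-linear Morse function on the first barycentric subdivision of $T$ and perturbing it near $\partial M$ produces a vector field $u_T$ on $M$, directed into $M$ along $\partial_0 M$ and outwards along $\partial_1 M$, whose zeros are exactly the barycenters $a_\sigma$ of the cells $\sigma\in K\setminus K_0$, with Poincar\'e--Hopf index $sgn(\sigma)=(-1)^{\dim\sigma}$ at $a_\sigma$. By the Poincar\'e--Hopf theorem for manifolds with boundary the total interior index is $\chi(M,\partial_0 M)=0$, so the zeros of $u_T$ can be grouped into pairs of opposite index.

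Given an Euler chain $\xi$ with $\partial\xi=\sum_{\sigma\in K\setminus K_0} sgn(\sigma)\,a_\sigma$, a small homotopy puts $\xi$ in general position as a disjoint union of smoothly embedded oriented arcs in $Int(M)$ joining the zeros of $u_T$ in opposite-index pairs, together with finitely many disjoint embedded loops. Performing the standard cancellation of a pair of opposite-index zeros in a tubular neighbourhood of each arc --- and the \emph{spinning} move described below along each loop --- removes every interior zero and yields a nowhere-zero vector field $v_\xi$ on $(M,\partial_0 M)$; one sets $\rho([\xi])=[v_\xi]$. Keeping all arcs and loops inside $Int(M)$ preserves the prescribed boundary behaviour of $v_\xi$. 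Independence of the chosen arc-and-loop representative, hence of the representative $\xi$ of a given Euler structure, will follow from the equivariance statement below, since two representatives differ by a $1$-cycle and a null-homologous cycle must act trivially.

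It remains to read $\rho$ as a morphism of $H_1(M;\mathbb{Z})$-torsors. By definition $Eul(T,T_0)$ is a free transitive $H_1(M;\mathbb{Z})$-set, the action being addition of a $1$-cycle to an Euler chain. The set $vect(M,\partial_0 M)$ carries Turaev's free transitive $H_1(M;\mathbb{Z})$-action: represent a class by disjoint embedded loops and modify the vector field inside their solid-torus neighbourhoods by one full twist around each core (the spinning move); this is where $\dim M\ge 2$ and $\chi(M,\partial_0 M)=0$ enter, and in the dimension-$3$ case of interest it matches the standard $\mathrm{Spin}^c$-structure picture, under which $vect$ is a torsor over $H^2(M)\cong H_1(M)$. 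One then checks $\rho([\xi]+h)=h\cdot\rho([\xi])$ for all $h\in H_1(M;\mathbb{Z})$: appending to $\xi$ a loop representing $h$ changes the arc-and-loop data by exactly that loop, and the extra spinning of $v_\xi$ is, in a solid-torus local model, precisely Turaev's action of $h$. Since any equivariant map between non-empty free transitive $G$-sets is a bijection, $\rho$ is the sought bijection.

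I expect the crux to be this equivariance/well-definedness step. One has to set up the zero-cancellation so that the homotopy class of $v_\xi$ on $M$ minus a ball depends only on $[\xi]$, keep the boundary conditions under control throughout, and run an honest computation in the local model of a loop to match the two $H_1$-actions. Building an explicit inverse instead --- comparing an arbitrary nowhere-zero field $v$ with $u_T$ to extract a relative difference class in $H_1$ and realising it by an Euler chain --- is a possible alternative, but the torsor argument dispenses with the need for it.
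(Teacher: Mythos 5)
The paper attributes this theorem to Turaev and does not prove it; it only recalls the key construction (the singular model field $F_1$ on the barycentric subdivision, with a zero of index $(-1)^{\dim\sigma}$ at each barycenter, to be desingularized along a representative of the Euler chain). Your model field $u_T$ is $F_1$ up to cosmetic changes, and your overall route --- singular model, cancellation of opposite-index zeros along the arcs of a general-position representative, spinning along the loop components, and bijectivity via the $H_1(M;\mathbb{Z})$-torsor structure on both sides --- coincides with Turaev's approach as cited, and the torsor trick is indeed the standard finish.

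There is, however, a soft spot exactly where you locate ``the crux.'' You claim that well-definedness of $\rho$ follows from equivariance because ``two representatives differ by a $1$-cycle and a null-homologous cycle must act trivially.'' But the equivariance you later verify concerns appending a \emph{disjoint} embedded loop $\ell$ to the arc-and-loop data. Two general-position representatives $A,A'$ of the same chain with the same boundary need not differ by a disjoint loop: their formal difference $z=A'-A$ is an arbitrary $1$-cycle, and $A'$ is typically not $A$ with extra loops attached. Before the equivariance can even be meaningfully stated and checked on disjoint-loop representatives, you must show directly that if $[z]=0\in H_1(M;\mathbb{Z})$ then $[v_{A'}]=[v_A]$; Turaev does this by spanning the null-homologous cycle with a singular surface and homotoping the vector field across it. As written, your argument presupposes the well-definedness it is supposed to establish. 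A smaller wrinkle: in dimension $2$ one cannot in general make the arcs and loops pairwise disjoint, so the normalization step at the start needs a caveat that is not visible in the hypothesis $\dim M \geq 2$.
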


\begin{remark}
 In fact, this bijection is an $H_1(T)$-isomorphism, but we'll make no use of it.
\end{remark}

Let us describe the construction of Turaev in the case $\partial M = \emptyset$.
Let $T$ be a $C^1$-triangulation of $M$ and $T'$ be the first barycentric subdivision of $T$.
We recall the definition of the vector field $F_1$ with singularities on $M$.
For a simplex $a$ of the triangulation $T$, let $\underline{a}$ denotes its barycenter.
If $A = <\underline{a}_0,\underline{a}_1,\ldots,\underline{a}_p>$ is a simplex of the triangulation $T'$, where $a_0 < a_1 < \ldots <a_p$ are simplexes of $T$, then, at a point $x\in Int(A)$,
$$
F_1(x) = \sum_{ 0\leq i<j\leq p } \lambda_i(x) \lambda_j(x) (\underline{a}_j - x)
$$
Here $\lambda_0,\lambda_1,\ldots, \lambda_p$ are barycentric coordinates in $A$ and $\underline{a}_j - x$ is the image of the tangent vector $\underline{a}_j - x \in T_x A$ via the homeomorphism between $T$ and $M$.

Every barycenter of each cell of $T$ is a singular point of $F_1$.
Let $\mathcal M$ be a matching on the Hasse diagram of $T$: in particular, every edge of the matching connects two singular points.
Turaev proved that the index of $F_1$ in a neighbourhood of every edge of the Hasse diagram (thought as embedded in $M$ in the obvious way) is equal to zero.
Thus, if we think about combinatorial (not necessarily Morse) vector field on $T$ as a Matching on its Hasse diagram, it encodes a desingularization of the vector field $F_1$ (where critical points of $F_1$ remain if the corresponding cell is critical).
This can be done in the case of Euler chain.
More precisely, if $\xi$ is an Euler chain, let $F_{\xi}$ denote an extension of $F_1$ to a non-singular vector field.
Turaev proved that the homotopy $[F_{\xi}]\in Vect(M)$ depends only on $[\xi]\in Eul(T)$.
The map $\rho$ is defined by $\rho([\xi])=[F_\xi]$.

In the general case where $\xi$ corresponds to a matching (instead of a complete matching), we still denote by $\rho$ the map which assigns to $\xi$ the vector field $F_{\xi}$ given by the construction above.

We refer to \cite{Turaev3} in the case $\partial M \neq \emptyset$.

\subsection{$\mathcal M$-realization of Euler structures}
\begin{definition}
 Let $[\xi]\in Eul(M,\partial_0 M)$.
We say that $[\xi]$ has a $\mathcal M$-realization if there exists a $C^1$-triangulation $T$ of $(M,\partial M)$ and a matching $\eta$ on the corresponding Hasse diagram such that $[\eta] = [\xi]$.
\end{definition}


\begin{theorem}\label{theo:m-realization}
 Any Euler structure on a smooth oriented closed riemannian 3-manifold has an $\mathcal M$-realization.
\end{theorem}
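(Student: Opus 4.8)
The strategy is to combine the combinatorial realization theorem (Theorem~\ref{theo:combinatorial-realization}) with Turaev's bijection $\rho: Eul(T,T_0)\to vect(M)$, and to exploit the flexibility afforded by Corollary~\ref{cor:existence-matching} to absorb the discrepancy between the vector field coming from a realized Thom--Smale complex and the one prescribed by the given Euler structure. Concretely, let $[\xi]\in Eul(M)$ be given. Under Turaev's bijection it corresponds to a homology class $[v]\in vect(M)$ of non-singular vector fields. The plan is: first, realize $[v]$ (or a convenient representative) as the flow of a gradient-like vector field away from a small ball; second, feed the associated generic Morse function into Theorem~\ref{theo:combinatorial-realization} to obtain a $C^1$-triangulation $T$ and a combinatorial Morse vector field $V$; third, as in the proof of Corollary~\ref{cor:existence-matching}, modify $V$ along the $V$-paths dual to the chosen connecting orbits to produce a complete matching $\mathcal M$; and finally, verify that the Euler chain $[\xi_{\mathcal M}]$ determined by $\mathcal M$ maps under $\rho$ to $[v]$, hence $[\xi_{\mathcal M}]=[\xi]$ by injectivity of $\rho$.

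The first step rests on standard $3$-manifold facts: since $\chi(M)=0$, every homology class of vector fields contains a representative that is the gradient of a self-indexed Morse function with exactly one critical point of index $0$ and one of index $3$, $g$ of index $1$ and $g$ of index $2$ (a Heegaard diagram), and the connecting orbits between successive indices are controlled by the chosen Heegaard diagram exactly as in Corollary~\ref{cor:existence-matching}. The key point is that by choosing the Heegaard diagram (and the distinguished intersection points $x$ and the basepoint $z$) appropriately one can arrange that the resulting non-singular vector field $F_{\xi_{\mathcal M}}$ obtained after cancelling all critical cells represents precisely $[v]=\rho([\xi])$. Here one uses that $\rho$ is compatible with subdivision, that the index of Turaev's field $F_1$ near an edge of the Hasse diagram vanishes (so each matched pair contributes nothing to the homotopy class), and that the $V$-path along which we cancel a critical pair $(\sigma_{q_i},\tau_{p_{i_j}})$ geometrically traces the connecting orbit, so cancelling it along that path is the combinatorial shadow of the Morse cancellation that removes the corresponding index-$1$/index-$2$ pair of zeros of the gradient.

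The main obstacle will be the last verification: showing that the complete matching $\mathcal M$ built by modifying $V$ along $V$-paths realizes exactly the Euler structure $[\xi]$ and not merely \emph{some} Euler structure. This requires matching two a priori different recipes for a homology class in $H_1(M)$ — on the one side, the class of the Euler chain $\xi_{\mathcal M}$, a $1$-chain in $T$ running through the barycenters with the sign-weighted boundary $\sum sgn(\sigma)a_\sigma$; on the other, the homotopy class $[F_{\xi_{\mathcal M}}]$ of the desingularized Turaev field, which must be compared with $[v]$. The cleanest route is to track the $H_1(M)$-equivariance of $\rho$: since $\rho$ is an $H_1(M)$-isomorphism, it suffices to check agreement on a single fixed reference structure, and then show that changing the distinguished intersection points of the Heegaard diagram (respectively changing $\xi$ by a cycle) changes both sides by the same element of $H_1(M)$; this reduces the problem to a local computation near one connecting orbit, which one does once and for all. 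I expect this bookkeeping — keeping the triangulation, the Morse data, the $V$-paths, and Turaev's field all consistently oriented — to be the delicate part, while the existence of the triangulation and the complete matching is already essentially supplied by Theorem~\ref{theo:combinatorial-realization} and Corollary~\ref{cor:existence-matching}.
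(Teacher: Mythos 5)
Your proposal takes essentially the same route as the paper: start from an Euler structure $[\xi]$, pass to $[v]\in vect(M)$, choose a pointed Heegaard splitting and $n$-uplet of intersection points realizing $[v]$, apply Theorem~\ref{theo:combinatorial-realization} to get $(T,V)$, run the complete-matching construction of Corollary~\ref{cor:existence-matching}, and verify that the resulting Euler chain maps under $\rho$ to $[v]$. The paper handles the two steps you flag as delicate in a slightly more packaged way: the verification that $\rho(\xi_{\mathcal M})$ agrees with the desired vector field is carried by Lemmas~\ref{lemma:m-realization-product-cobordism} and~\ref{lemma:m-realization-cobordism-one-critical-point} (which show directly, cobordism by cobordism, that $\rho$ of the Euler chain is homologous to the chosen Morse--Smale gradient, with Turaev's field $F_1$ reproducing the gradient near each critical point), rather than by an $H_1$-equivariance reduction to a reference structure; and the surjectivity onto $vect(M)$ of the assignment Heegaard splitting $+$ intersection points $+$ basepoint $\mapsto [v]$ is not re-proved but cited from Ozsv\'ath--Szab\'o. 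Your proposed ``track the $H_1(M)$-equivariance of $\rho$'' strategy would also work but is more roundabout: it needs the triangulation to be held fixed (or subdivision-invariance invoked carefully) while intersection points vary, and a local index computation that the paper's two lemmas already supply in closed form. So the difference is in packaging of the verification, not in the underlying argument.
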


This theorem is a step toward Heegaard--Floer homology \cite{OS2,OS1}.
Recall the first steps of the construction of Heegaard--Floer homology for closed, oriented 3-manifolds.
Choose a pointed Heegaard splitting $M = U_0 \cup_{\Sigma} U_1$ (that is choose a Morse function) and fix a {${\rm Spin}\sp c$}-structure on $M$ given by an $n$-uplets of intersection points between the $\alpha$'s and the $\beta$'s.
Turaev proved that {${\rm Spin}\sp c$}-structures are in bijection with Euler structures (and so the set $vect(M)$).
Thus, Theorem \ref{theo:m-realization} together with Theorem \ref{theo:combinatorial-realization} give for a closed oriented 3-manifold a combinatorial realization of both the {${\rm Spin}\sp c$}-structure and the pointed Heegaard splitting.
The hard part remaining is to understand how holomorphic disks can be combinatorially realized.

The following two lemmas will be useful to prove Theorem \ref{theo:m-realization}.

\begin{lemma}\label{lemma:m-realization-product-cobordism}
 Let $(M\times[0,1];M\times\{0\},M\times\{1\})$ be a smooth $n+1$ product cobordism such that $T_i$ is a $C^1$-triangulation of $M\times\{i\}$ for $i\in \{0,1\}$.
Let $(T;T_0,T_1)$ be any triangulation given by lemma \ref{lemma:gluing-cobordisms} and $[\xi]\in Eul(M\times[0,1],M\times\{0\})$ the Euler structure given by the combinatorial Morse vector field induced by $T\searrow T_0$.
Then $\rho([\xi])\in vect(M\times[0,1],M\times\{0\})$ is homologous to the vector field $v:M\times[0,1] \rightarrow T(M\times[0,1])$ defined by $v(x,t) = ((x,t),dt)$.
\end{lemma}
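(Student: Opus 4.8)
The plan is to show that $\rho([\xi])$ and the vertical vector field $v(x,t)=((x,t),dt)$ agree up to homotopy through non-singular vector fields outside a small ball, and the cleanest way to do this is to exploit the fact that $[\xi]$ comes from a \emph{collapsing} $T\searrow T_0$. First I would recall that, by the construction of $\rho$, the vector field $F_\xi$ is obtained from Turaev's singular field $F_1$ on $M\times[0,1]$ by desingularizing along the edges of the matching $\mathcal M$ underlying the collapsing $T\searrow T_0$, leaving genuine zeros only at the barycenters of cells of $T_0$. Since $\chi(M\times\{0\})=\chi(M)$ need not vanish, these remaining zeros on $M\times\{0\}$ are exactly what is permitted: the field points inward along $M\times\{0\}$ and outward along $M\times\{1\}$, matching the boundary behavior of $v$. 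The key point is that a collapsing has a dynamical meaning: the matching arrows, read as a combinatorial flow, push everything in $M\times[0,1]$ down toward $T_0=T\cap(M\times\{0\})$, exactly as the vertical field $v$ (reversed) does. So the two fields are both ``downward'' fields and one expects them to be homotopic.

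Next I would make this precise using the homotopy-invariance already built into Turaev's theory. Because $\rho$ depends only on $[\xi]$ and not on the triangulation, I may first subdivide and choose a convenient $T$: take the triangulation produced by Lemma 2.11 (product of a simplex with $\Delta^1$), so that $T$ is, cell by cell over a triangulation of $M$, the explicit subdivided prism, and the collapsing $T\searrow T_0$ is the explicit one of that lemma. For this model triangulation I would construct the homotopy cell-by-cell (prism-by-prism): on each prism $\sigma\times[0,1]$ the desingularized $F_\xi$ is, by Lemma 2.11's construction, a standard ``collapse toward the bottom face'' field, which is visibly homotopic rel a neighborhood of $\partial(\sigma\times[0,1])$ to the vertical field $-\partial_t$ (or $+\partial_t$, depending on sign conventions) after possibly reversing orientation on the matching arrows — recall $\xi$ orients edges from odd to even dimension. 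These local homotopies are compatible on overlaps because on each shared face the two fields already agree with the restriction prescribed by Turaev's $F_1$, so they glue to a global homotopy through non-singular fields on $(M\times[0,1])-\mathrm{Int}(B)$, where $B$ is a small ball around one chosen barycenter in $T_0$ absorbing the index discrepancy $\chi(M)$. This gives $\rho([\xi])=[v]$ in $vect(M\times[0,1],M\times\{0\})$.

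The main obstacle I anticipate is the bookkeeping of \emph{signs and orientations}: one must check that the desingularization of $F_1$ along a collapsing arrow $(\sigma<\tau)$, which Turaev shows has index zero, produces a field pointing in the direction consistent with ``flow downward toward $T_0$'', and that the induced boundary orientations on $M\times\{0\}$ and $M\times\{1\}$ (inward/outward) come out right rather than reversed — a wrong global sign here would make $\rho([\xi])$ homologous to $-v$ on the interior, which is generally a different class. I would handle this by fixing once and for all the convention (as in the Remark preceding this subsection) that edges are oriented from odd-dimensional to even-dimensional cells, tracing it through the explicit prism model of Lemma 2.11 on the lowest-dimensional prism $\Delta^0\times[0,1]$, where everything is one-dimensional and the computation is immediate, and then invoking the inductive structure of Lemma 2.11's collapsing to propagate the sign to all prisms. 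A secondary, more routine obstacle is verifying that the local homotopies can be taken to be stationary near the faces where cells meet, so that they patch; this follows from the product structure of $F_1$ near such faces and is of the same nature as the gluing arguments already used in Lemma 2.12.
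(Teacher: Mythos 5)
Your overall strategy -- unwind the definition of $\rho$, use the explicit prism triangulation and collapsing of Lemma~\ref{lemma:product-of-simplex}, and build the homotopy cell-by-cell -- is in the same spirit as the paper's (extremely terse) proof, which simply asserts that the conclusion follows ``by definition of $\rho$, by the construction of the triangulation and by the definition of the combinatorial Morse vector field.'' Your version is more honest in flagging the sign/orientation bookkeeping as the real content; that worry is legitimate and the paper does not address it.

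However, there are two concrete errors in your plan. First, $\rho(\xi)=F_\xi$ is by definition a \emph{non-singular} vector field on $(M\times[0,1],M\times\{0\})$; there are no ``remaining zeros on $M\times\{0\}$.'' The quantity that must vanish is the relative Euler characteristic, and indeed $\chi(M\times[0,1],M\times\{0\})=\chi(M)-\chi(M)=0$, so there is nothing for boundary zeros to account for -- the barycenters of cells of $T_0$ are singularities of the auxiliary field $F_1$, not of $F_\xi$. Second, the ball $B$ in Turaev's definition of homologous vector fields must lie in $\mathrm{Int}(M\times[0,1])$, so you cannot take it ``around one chosen barycenter in $T_0$,'' which sits on $\partial_0$; and its purpose is not to absorb an index discrepancy (there is none, as above) but to quotient out the top-degree homotopy obstruction in $H^{n+1}(M\times[0,1],\partial;\pi_{n+1}(S^{n+1}))$ that is already built into the equivalence relation ``homologous.'' If you correct these two points and place $B$ in the interior, the prism-by-prism homotopy you describe, anchored on the $\Delta^0\times\Delta^1$ computation and propagated by the inductive structure of Lemma~\ref{lemma:product-of-simplex}, is the right way to make the paper's one-line assertion precise.
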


\begin{proof}
The construction of the collapsing $T\searrow T_0$ defines a combinatorial Morse vector field pointing downwards.
Let $\xi\in Eul(T,T_0)$ be the corresponding Euler chain.
The map $\rho:Eul(M\times[0,1],M\times\{0\}) \rightarrow vect(M\times[0,1],M\times\{0\})$ sends $\xi$ to a non-singular vector field on $(M\times[0,1],M\times\{0\})$ which is, by definition of $\rho$, by the construction of the triangulation and by the definition of the combinatorial Morse vector field,  homologous to the desired vector field.
\end{proof}

\begin{lemma}\label{lemma:m-realization-cobordism-one-critical-point}
Let $f$ be a generic Morse function on a cobordism $(M;M_0,M_1)$ with exactly one critical point $p$ of index $k$ and $(T;T_0,T_1)$ be a $C^1$-triangulation of the cobordism $(T;T_0,T_1)$ given by Theorem \ref{theo:cobordism-one-critical-point}.
Let $\xi$ be the one singular chain corresponding to the matching given by the combinatorial Morse vector field on $T$.
Then $\rho(\xi)$ is homologous to the chosen Morse--Smale gradient of $f$ outside a small ball neighbourhood of the critical point $p$.
Moreover, the index of $\rho(\xi)$ at $p$ is equal to $k$.
\end{lemma}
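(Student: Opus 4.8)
The plan is to compare $\rho(\xi)=F_{\xi}$ with the chosen Morse--Smale gradient $v$ by cutting the cobordism exactly as in the proof of Theorem \ref{theo:cobordism-one-critical-point}: $M$ is the union of a tubular neighbourhood $C\cong D^{k}\times D^{n-k}$ of the stable manifold $W^{s}(p,v)$, inside which $W^{s}(p,v)$ is the standard simplex $\overline{\sigma}_p=T^{s}_{p}$ triangulated via Lemma \ref{lemma:product-simplex-nxm} (recall $p\in\sigma_p$; we may and do take $p$ to be the barycentre of $\sigma_p$), together with product cobordisms obtained by pushing triangulations along the integral curves of $v$ and glued in by Lemma \ref{lemma:gluing-cobordisms}. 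The matching $\mathcal{M}$ underlying $\xi$ is the disjoint union of the matchings of the two collapsings $T^{C}\searrow T^{s}_{p}\cup T^{C}_{\vert \partial\overline{\sigma}_p\times D^{n-k}}$ and $T^{s}_{p}-\sigma_p\searrow\partial T^{s}_{p}$ together with the matchings of the product pieces, and its only cell of $K-K_0$ not covered by $\mathcal{M}$ is $\sigma_p$. Since desingularising Turaev's field $F_1$ along an edge of $\mathcal{M}$ removes the singularity of $F_1$ at both endpoints of that edge while the singularity at a critical cell is left untouched, $F_{\xi}$ is non-singular on $Int(M)$ away from $p$, so the task reduces to pinning down its homotopy class.

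\textbf{On the product pieces.} Each product cobordism $N\times[0,1]$ occurring in the decomposition carries precisely the triangulation and combinatorial Morse vector field produced by Lemma \ref{lemma:gluing-cobordisms}, so Lemma \ref{lemma:m-realization-product-cobordism} shows that there $\rho(\xi)$ is homologous to the vertical field $(x,t)\mapsto((x,t),dt)$. As the product structure on these pieces is the one induced by the flow of $v$, the gluing diffeomorphism carries that vertical field to a positive multiple of $v$; hence $\rho(\xi)$ is homologous to $v$ on $M-Int(C)$.

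\textbf{Inside $C$, and the index at $p$.} Here one unwinds the desingularisation of $F_1$ along the combinatorial Morse vector field of $C$. The collapsing $T^{C}\searrow T^{s}_{p}\cup(\partial\overline{\sigma}_p\times D^{n-k})$ is the one of Lemma \ref{lemma:product-simplex-nxm} with $\{a_0\}$ taken to be the centre of $D^{n-k}$, so its matched edges --- hence $F_{\xi}$ --- point, in the $D^{n-k}$ factor, away from $W^{s}(p,v)=\overline{\sigma}_p$; in the Morse chart of Theorem \ref{theo:cobordism-one-critical-point} the normal component of $v$ is $(x_{k+1},\dots,x_{n})$, which likewise points away from the stable manifold. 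The collapsing $T^{s}_{p}-\sigma_p\searrow\partial T^{s}_{p}$ takes place inside $W^{s}(p,v)$, so the corresponding part of $F_{\xi}$ is tangent to $W^{s}(p,v)$ and points towards $\partial T^{s}_{p}$, i.e.\ towards $M_0$, just as the tangential component $(-x_1,\dots,-x_k)$ of $v$ points away from $p$. Consequently, on the boundary $\partial B$ of a small round ball $B$ around $p$ the restrictions of $\rho(\xi)$ and $v$ are homotopic through non-singular fields, and this homotopy extends over $C-Int(B)$; together with the first step this gives that $\rho(\xi)$ is homologous to $v$ on $M-Int(B)$. Finally, $\rho(\xi)$ being non-singular on $Int(M)-\{p\}$, its local index at $p$ is read off its restriction to $\partial B$, which is homotopic to that of $v$; since $v$ near $p$ is the model of an index-$k$ critical point ($k=\dim W^{s}(p,v)$), the index of $\rho(\xi)$ at $p$ equals $k$.

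\textbf{Main obstacle.} The delicate point is the analysis inside $C$: one must check that the desingularisation of $F_1$ along the explicit collapsings of Lemmas \ref{lemma:product-of-simplex} and \ref{lemma:product-simplex-nxm} agrees, up to homotopy away from $p$, with the coordinate form of the gradient in its Morse chart --- in effect a careful bookkeeping of the directions in the cone construction, and of the way the homotopy on $\partial B$ propagates over $C-Int(B)$. By contrast, the product pieces are covered by a direct appeal to Lemma \ref{lemma:m-realization-product-cobordism}, and once the homotopy on $\partial B$ is in hand the index assertion is immediate.
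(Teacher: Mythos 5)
Your overall plan coincides with the paper's: use the decomposition of the cobordism furnished by Theorem \ref{theo:cobordism-one-critical-point}, apply Lemma \ref{lemma:m-realization-product-cobordism} on the product pieces glued along the flow of $v$, and then analyze the combinatorial data inside the tubular neighbourhood $C$ of the stable manifold. The paper handles the last step more tersely, by asserting that $F_1$ itself agrees with the Morse chart model of $v$ near $p$ (so the index claim is immediate from the definition of $F_1$), and invoking the collapsing $T^C\searrow \sigma_p\cup(T_0\cap T^C)$ to conclude on $C$ outside the Morse chart; you instead try to unwind the collapsings of Lemmas \ref{lemma:product-of-simplex} and \ref{lemma:product-simplex-nxm} explicitly, which is where the argument goes wrong.

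The concrete problem is the direction bookkeeping in the paragraph on $C$. You write that the tangential component $(-x_1,\dots,-x_k)$ of $v$ ``points away from $p$''; it points \emph{towards} $p$: at a point $x\neq 0$ of the stable manifold the vector $(-x_1,\dots,-x_k)$ is $-1$ times the position vector, hence aimed at the origin, as it must be since the stable manifold flows into $p$. You also assert that along the collapsing $T^s_p-\sigma_p\searrow\partial T^s_p$ the field $F_\xi$ points \emph{towards} $\partial T^s_p\subset M_0$, but this is the opposite of what Lemma \ref{lemma:m-realization-product-cobordism} teaches: there, the collapsing $T\searrow T_0$ produces a field homologous to $((x,t),dt)$, i.e.\ pointing \emph{away} from the side one collapses onto. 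By the same logic the field produced by $T^s_p-\sigma_p\searrow\partial T^s_p$ points away from $\partial T^s_p$, i.e.\ towards $p$ — matching $v$, but not for the reason you give. So the two direction statements on which you base the homotopy over $\partial B$ are both reversed; the conclusion they are meant to support happens to be true, but the justification as written is incorrect. To repair this you should either fix both orientations (with a careful statement of how Turaev's desingularisation relates the matching direction of a collapsing to the direction of $F_\xi$, consistent with Lemma \ref{lemma:m-realization-product-cobordism}), or, more simply, follow the paper and observe directly from the formula for $F_1$ that near the barycentre of $\sigma_p$ the field $F_1$ already realizes the Morse model of index $k$, from which both the homotopy on $\partial B$ and the index claim follow at once.
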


\begin{proof}
We follow notations of Theorem \ref{theo:cobordism-one-critical-point}.
The two collapsings $T\searrow T^s_p \cup T_0$  and $T^s_p - \sigma_p \searrow (T^s_p \cap T_0)$ define a combinatorial Morse vector field with only one critical cell.
Let $\xi$ be the corresponding one singular chain.
Thus, the barycenter of this cell (which is $p$) must be a critical point of $\rho(\xi)$.
It remains to check that outside a small neighbourhood of $p$, $\rho(\xi)$ is homologous to the Morse--Smale gradient $v$.
Since outside the tubular neighbourhood $C$ of the stable manifold the triangulation is constructed by pushing it along gradient lines of $v$, we can use lemma \ref{lemma:m-realization-product-cobordism} to see that $\rho(\xi)$ is homologous to $v$ outside the tubular neighbourhood $C$ of the stable manifold.
In a small ball neighbourhood (which is a Morse chart at $p$) of the critical point $p$, the vector field $F_1$ coincides with the Morse--Smale gradient $v$.
Since $T^C \searrow \sigma_p \cup (T_0 \cap T^C)$, $\rho(\xi)$ is homologous to $v$ outside the Morse chart of $p$ .
The fact that the index of $\rho(\xi)$ at $p$ is equal to $k$ is a consequence of the definition of $F_1$.
\end{proof}

\begin{proof}[Proof of Theorem \ref{theo:m-realization}]
We apply Theorem \ref{theo:combinatorial-realization} to obtain a $C^1$-triangulation of the 3-manifold and a combinatorial Morse vector field which realizes combinatorially the Thom-Smale complex.
Then, the construction done in corollary \ref{cor:existence-matching} defines a matching which in turns defines an Euler chain $\xi$.
The map $\rho$ sends $\xi$ to a non-singular vector field which is by construction homologous to the Morse--Smale gradient of $f$.
Thus, to prove the theorem for any $[v]\in vect(M)$, it remains to find a pointed Heegaard splitting $(\Sigma_g ; \underline{\alpha} = (\alpha_1,\ldots,\alpha_g), \underline{\beta} = (\beta_1,\ldots,\beta_g);z)$ of the 3-manifold $M$ such that an $n$-uplet of intersection points $x$ corresponds to a given $[v]\in vect(M)$.
Finally, \cite[Lemma 5.2]{OS1} tells that any $[v]\in vect(M)$ can be realized in such a way.
This concludes the proof.
\end{proof}

\bibliography{article}
\bibliographystyle{amsplain}
\end{document}